\newtheorem{remark}{Remark}[]
\newtheorem{corollary}{Corollary}[]
\newtheorem{proposition}{Proposition}[]
\theoremstyle{definition}
\newtheorem{notation}{Notation}
\begin{document}
		
\title{Qualitative properties of numerical methods for the inhomogeneous geometric Brownian motion}
\author{Irene Tubikanec\footnotemark[1]\thanks{Institute for Stochastics, Johannes Kepler University Linz (irene.tubikanec@jku.at,evelyn.buckwar@jku.at)}, Massimiliano Tamborrino\footnotemark[2]\thanks{Department of Statistics, University of Warwick (massimiliano.tamborrino@warwick.ac.uk)}, Petr Lansky\footnotemark[3]\thanks{Institute of Physiology, Czech Academy of Sciences (lansky@biomed.cas.cz)}, Evelyn Buckwar\footnotemark[1] \footnotemark[4]\thanks{Centre for Mathematical Sciences, Lund University}\\}

\date{}
\maketitle	
	
\thispagestyle{empty}

\section*{Abstract}

We provide a comparative analysis of qualitative features of different numerical methods for the inhomogeneous geometric Brownian motion (IGBM). The conditional and asymptotic mean and variance of the IGBM are known and the process can be characterised according to Feller's boundary classification. We compare the frequently used Euler-Maruyama and Milstein methods, two Lie-Trotter and two Strang splitting schemes and two methods based on the ordinary differential equation (ODE) approach, namely the classical Wong-Zakai approximation and the recently proposed log-ODE scheme. First, we prove that, in contrast to the Euler-Maruyama and Milstein schemes, the splitting and ODE schemes preserve the boundary properties of the process, independently of the choice of the time discretisation step. Second, we derive closed-form expressions for the conditional and asymptotic means and variances of all considered schemes and analyse the resulting biases. While the Euler-Maruyama and Milstein schemes are the only methods which may have an asymptotically unbiased mean, the splitting and ODE schemes perform better in terms of variance preservation. The Strang schemes outperform the Lie-Trotter splittings, and the log-ODE scheme the classical ODE method. The mean and variance biases of the log-ODE scheme are very small for many relevant parameter settings. However, in some situations the two derived Strang splittings may be a better alternative, one of them requiring considerably less computational effort than the log-ODE method. The proposed analysis may be carried out in a similar fashion on other numerical methods and stochastic differential equations with comparable~features.

\vspace{-0.2cm}
\subsubsection*{Keywords} Geometric Brownian motion, Inhomogeneous drift, Feller's boundary classification, Numerical splitting schemes, Boundary preservation, Moment preservation

\vspace{-0.2cm}
\subsubsection*{AMS subject classifications} 60H10, 60H35, 65C20, 65C30

\vspace{-0.2cm}
\subsubsection*{Acknowledgements} The authors would like to thank James Foster for his interest in this manuscript and for his helpful input regarding the conditional moments of the log-ODE method. This work was supported by the Austrian Exchange Service (OeAD), bilateral project CZ 19/2019 and by the Austrian Science Fund (FWF), W1214-N15, project DK 14. 

\vspace{0.5cm}

\section{Introduction}
\vspace{-0.2cm}

The inhomogeneous geometric Brownian motion (IGBM), described by the It\^o stochastic differential equation (SDE)
\vspace{-0.2cm}
\begin{equation*}
dY(t)=\left(-\frac{1}{\tau}Y(t)+\mu\right) dt+\sigma Y(t) dW(t), \quad t \geq 0, \quad Y(0)=Y_0, \vspace{-0.1cm}
\end{equation*}
is frequently applied in mathematical and computational finance, neuroscience and other fields. In particular, it is often used to describe price fluctuations in finance \cite{Capriotti2018,Zhao2009} or changes in the neuronal membrane voltage in neuroscience \cite{Donofrio2018}. This process is also known as geometric Brownian motion (GBM) with affine drift~\cite{Linetsky2004}, geometric Ornstein-Uhlenbeck (OU) process \cite{Insley2002} or mean reverting GBM \cite{Sarkar2003} in real option theory, as Brennan-Schwarz model \cite{Brennan1979,Chan1992} in the interest rate literature, as GARCH model \cite{Barone2005,Minqiang2018} in stochastic volatility and energy markets, as Lognormal diffusion with exogenous factors \cite{Guiterrez1997} in growth analysis and forecasting or as reciprocal gamma diffusion in \cite{Leonenko2012}. The IGBM is a multiplicative noise process, characterised by an inhomogeneous drift term, defined through $\mu \in \mathbb{R}$, and can be seen as an illustrative equation for this class of SDEs. In particular, it 
is a member of the Pearson diffusion class \cite{Sorensen2007}. Differently from other well-known Pearson diffusions, such as the OU process \cite{Arnold1974,Lansky1995} and the square-root process \cite{Cox1985,Ditlevsen2006,Feller1951,Lansky1995}, the transition density of the IGBM does not have a practical closed-form expression \cite{Zhao2009} and an exact simulation method is not available. Hence, we need to rely on numerical methods that accurately reproduce the features of the process, making its analysis and investigation via simulations possible and~reliable.

A large part of the area of (stochastic) numerical analysis is devoted to convergence of numerical methods in a suitable sense. These are limit results for the time discretisation step going to zero over a finite interval and, of course, numerical methods which do not converge should not be used. Nevertheless, in practice, a strictly positive time step is required. In consequence, the numerical method can be viewed as the solution of a discrete dynamical system, which may or may not have the same properties and behaviour as the solution of the original problem \cite{Hairer2006}. In the worst case, although the method converges, the discretisation step may alter the essential properties of the model, making the numerical method practically useless or very inefficient. 

The purpose of this article is to analyse and compare different numerical methods regarding their ability to preserve qualitative features of the IGBM for a fixed time discretisation step. In particular, we focus on methods based on the splitting and ordinary differential equation (ODE) approaches, and on their comparison with the commonly used Euler-Maruyama and Milstein~schemes. 

The idea behind the splitting approach is to split the equation of interest into explicitly solvable subequations, and to apply a proper composition of the resulting exact solutions. A standard procedure is the Lie-Trotter composition \cite{Trotter1959}, and a less commonly analysed method is the Strang approach \cite{Strang1968}. We refer to \cite{Blanes2009,Blanes2010,Mclachlan2002} for an exhaustive discussion of splitting methods for broad classes of ODEs and to \cite{Ableidinger2016,Ableidinger2017,Brehier2018,Leimkuhler2015,Milstein2004,Misawa2001,Moro2007,Petersen1998,Shardlow2003} for extensions to SDEs. Here, we derive two Lie-Trotter and two Strang splitting schemes for the IGBM. While the Lie-Trotter schemes coincide with the methods discussed in \cite{Moro2007}, the Strang schemes have not been considered before. 

The ODE approach \cite{Wong1965_2,Wong1965} is based on the idea of linking Stratonovich calculus with ODE tools. To construct higher-order schemes, this approach has been extended by defining the underlying ODE via a truncated exponential Lie series expansion, where iterated integrals of Brownian motion and time are approximated by their means, conditioned on the given increments of the Wiener process \cite{Castell1996,Malham2008}. Here, we consider the classical method \cite{Wong1965_2}, sometimes called piecewise linear method, and the scheme recently introduced by Foster et al. \cite{Foster2020}. They proposed a pathwise polynomial approximation method of the Brownian motion, which was used to estimate third order iterated integrals of Brownian motion and time. Incorporating these results into the ODE approach yielded a new numerical method for the IGBM, extending the classical ODE~method.

Among the properties of the IGBM, we are interested in both its conditional and asymptotic features (mean, variance and stationary density) and its boundary behaviour. The conditional and asymptotic mean and variance of this process are explicitly known. Hence, our first goal is to analyse whether the numerical methods accurately reproduce them. In particular, we derive closed-form expressions for the conditional and asymptotic means and variances of the  considered numerical methods. These quantities differ from the true ones. For this reason, we compare the resulting explicit biases. Knowing them is particularly relevant because it allows for a direct control of the respective simulation accuracy through the time discretisation step. This may be particularly beneficial, for example, in different statistical inference tools.

Other features we are interested in are the boundary properties of the IGBM. Depending on the parameter $\mu$, the IGBM possesses different properties at the boundary zero, according to Feller's classification \cite{Karlin1981}. Our second goal is to analyse whether the numerical methods preserve them. This is particularly important, since the nature of a boundary may force the process to change its behaviour near or at the boundary. While frequently applied numerical methods, such as the Euler-Maruyama, Milstein or higher-order It\^{o}-Taylor approximation schemes, may fail in meeting such conditions \cite{Alfonsi2005,Malham2013,Moro2007}, we prove that the splitting and ODE schemes preserve them. While Feller's boundary classification is a standard concept in the field of stochastic analysis, it is not so often adopted as a qualitative feature in the analysis of numerical methods. An exception constitutes the topic of positivity preservation, often studied in terms of the square-root process \cite{Alfonsi2005,Kahl2008,Malham2013,Misawa2001} and the  domain-invariance \cite{Gobet2001,Mackevicius2007,Pierret2016,Stamatiou2018}. For an investigation of these issues related to splitting methods, we refer to \cite{Mackevicius2007,Misawa2001}. For a discussion of Feller's classification in the context of splitting schemes, we refer to \cite{Moro2007}, where the focus lies on proving convergence results and only Lie-Trotter compositions are considered.

If the parameter $\mu=0$, the IGBM coincides with the well-known GBM \cite{Arnold1974,Mao2011}, which has often been used as a test equation in the field of stochastic linear stability analysis in the mean-square or almost sure sense  \cite{Buckwar2011,Higham2000,Saito1996}. This theory has been introduced by Mitsui and Saito \cite{Saito1993,Saito1996}, based on stability theory in the sense of Lyapunov \cite{Khasminskii2012}, and has been extended to systems of SDEs in \cite{Buckwar2010,Buckwar2012,Saito2002,Tocino2012}. Since the standard setting of this approach requires a constant equilibrium solution for which both the drift and diffusion components become zero, it cannot be applied to the IGBM. Nevertheless, known results for the Euler-Maruyama and Milstein schemes applied to the GBM are covered by our study as a special case. Thus, the results presented in this article are also related to stochastic stability analysis for SDEs with inhomogeneous drift coefficients.

The paper is organised as follows.
In Section \ref{sec2}, we introduce the IGBM and recall its properties. In Section \ref{sec3}, we provide a brief account of the splitting and ODE approaches, and introduce the considered numerical schemes for the IGBM. In Section \ref{sec4}, we provide closed-form expressions for the conditional and asymptotic means and variances of the investigated schemes, analyse the resulting biases and discuss the boundary preservation. In Section \ref{sec5}, we illustrate the theoretical results of Section~\ref{sec4} through a series of simulations. Moreover, we illustrate the strong (mean-square) convergence rates of the different numerical methods and investigate their required computational efforts. In addition, we analyse their ability to approximate the underlying stationary density and study their behaviour at the lower boundary. Conclusions are reported in~Section~\ref{sec6}.

\vspace{-0.3cm}
\section{The IGBM and its properties}
\label{sec2}
\vspace{-0.1cm}

The IGBM is described by the It\^o SDE
\begin{equation} \label{IGBM1}
dY(t)=\underbrace{\left(-\frac{1}{\tau}Y(t)+\mu\right)}_{:=F(Y(t))} dt+\underbrace{\sigma Y(t)}_{:=G(Y(t))} dW(t), \quad t \geq 0, \quad Y(0)=Y_0,
\end{equation}
where $\tau,\sigma>0$, $\mu \in \mathbb{R}$ and $W=(W(t))_{t \geq 0}$ is a standard Wiener process defined on the probability space $(\Omega,\mathcal{F},\mathbb{P})$ with a filtration $\mathcal{F}=(\mathcal{F}(t))_{t \geq 0}$ generated by $W$. The initial value $Y_0$ is either a deterministic non-negative constant or an $\mathcal{F}(0)$-measurable non-negative random variable with finite second moment. Since~\eqref{IGBM1} is a linear and autonomous SDE, a unique strong solution process $Y=(Y(t))_{t \geq 0}$ exists \cite{Arnold1974,Mao2011}. The solution of the homogeneous SDE (if $\mu=0$) corresponds to the well-known GBM. The solution of the inhomogeneous equation can be expressed in terms of the embedded GBM. In particular, applying the variation of constants formula \cite{Mao2011} to \eqref{IGBM1} yields
\begin{equation}\label{Expl_Sol}
Y(t)=e^{-(\frac{1}{\tau}+\frac{\sigma^2}{2})t+\sigma W(t)} \left( Y_0+{\mu} \int_{0}^{t} e^{(\frac{1}{\tau}+\frac{\sigma^2}{2})s-\sigma W(s) } ds \right).
\end{equation}

\paragraph{Conditional and asymptotic mean and variance} 
Since $Y_0$ has finite second moment, the mean and variance of the strong solution process $Y$, conditioned on the initial value $Y_0$, exist. They are explicitly known \cite{Barone2005,Donofrio2018,Zhao2009} and given by 
\begin{eqnarray}
	\label{cond_mean}\mathbb{E}[Y(t)|Y_0]&=&Y_0 e^{-\frac{1}{\tau}t}+\mu\tau (1-e^{-\frac{1}{\tau}t}),\vspace{0.2cm} \\ 
	\hspace{0.9cm}\label{cond_var}\textrm{Var}(Y(t)|Y_0)&=&\begin{cases}
		e^{-\frac{1}{\tau}t}\left( 2\mu[t Y_0 -\tau Y_0 -t\mu\tau]+Y_0^2 \right)\\ \hspace{0.5cm}-e^{-\frac{2}{\tau}t} (Y_0-\mu\tau)^2 + (\mu\tau)^2, &  \text{if} \ \sigma^2\tau=1, \vspace{0.2cm} \\ 
		e^{-\frac{1}{\tau}t}\left( 4\mu\tau[\mu\tau - Y_0]  \right) - e^{-\frac{2}{\tau}t} \left( Y_0-\mu\tau \right)^2 \\ \hspace{0.5cm} + 2\mu^2\tau t-3(\mu\tau)^2+ 2\mu\tau Y_0 +Y_0^2, &  \text{if} \ \sigma^2\tau=2, \vspace{0.2cm} \\
		\frac{(\mu\tau)^2\sigma^2\tau}{2-\sigma^2\tau}+2\tau\sigma^2 \frac{(Y_0-\mu\tau)\mu\tau}{1-\sigma^2\tau}e^{-\frac{1}{\tau}t}-e^{-\frac{2}{\tau}t}(Y_0-\mu\tau)^2 \\
		\hspace{0.5cm}+e^{(\sigma^2-\frac{2}{\tau})t}\left[ Y_0^2 - \frac{2Y_0\mu\tau}{1-\sigma^2\tau}+\frac{2 (\mu\tau)^2}{(2-\sigma^2\tau)(1-\sigma^2\tau)} \right], &  \text{otherwise}.
	\end{cases}
\end{eqnarray}

Since $\tau>0$, from \eqref{cond_mean}, it follows that the asymptotic mean of $Y$ exists. It is given by \vspace{-0.1cm}
\begin{equation}\label{statE}
\mathbb{E}[Y_\infty]:=\lim\limits_{t \to \infty} \mathbb{E}[Y(t)|Y_0]=\mu\tau.
\end{equation}
From \eqref{cond_var}, it follows that, under the condition $\sigma^2\tau<2$, the asymptotic variance of $Y$ exists. It is given by
\begin{equation}\label{statV}
\textrm{Var}(Y_\infty):=\lim\limits_{t \to \infty} \textrm{Var}(Y(t)|Y_0)=\frac{(\mu\tau)^2}{\frac{2}{\sigma^2\tau}-1}.
\end{equation}\noindent

\vspace{-0.3cm}
\paragraph{Boundary properties}

Depending on the parameter $\mu$, the IGBM possesses different properties at the boundary~$0$ according to Feller's boundary classification \cite{Karlin1981}. In particular, if $\mu=0$ and $Y_0>0$, the boundary $0$ is unattainable and attracting, i.e., the process cannot reach $0$ in finite time, but is attracted to it as time tends to infinity. In terms of linear stochastic stability analysis, this means that the equilibrium solution $0$ is asymptotically almost sure stable, since $\mathbb{P}(\lim_{t \to \infty} Y(t)=0|Y_0>0)=1$. In the case that $\mu=Y_0=0$, the process is absorbed at the boundary immediately. If $\mu>0$, then $0$ is an entrance boundary, i.e., the process cannot reach the boundary in finite time if $Y_0>0$ or it immediately leaves $0$ and stays above it if $Y_0=0$. If $\mu<0$, the boundary is of exit type, i.e., the process can reach the boundary in finite time and, as soon as it attains the boundary, it leaves $[0,+\infty)$ and cannot return into it. In many applications the process is stopped when it reaches an exit boundary, such that its state space is $[0,+\infty)$. 

Feller's boundary classification is based on the idea of transforming the one-dimensional diffusion into a Wiener process, first by a change of space (through the scale density) and second by a change of time (through the speed density). The scale and speed densities are given by
\begin{eqnarray*}
	s(y)&:=&e^{-\int\limits_{y_0}^{y} \frac{2F(z)}{G^2(z) } \ dz }=s_0 e^{{2\mu}/{\sigma^2 y}}y^{{2}/{\sigma^2\tau}}, \quad s_0=e^{{-2\mu}/{\sigma^2y_0}}y_0^{{-2}/{\sigma^2\tau}}, \\ m(y)&:=&\frac{1}{G^2(y)s(y)}=m_0y^{-(2+{2}/{\sigma^2\tau})}e^{-{2\mu}/{\sigma^2 y}}, \quad m_0=s_0^{-1}\sigma^{-2},
\end{eqnarray*}
respectively, where $y_0>0$ and $F$ and $G$ denote the drift and diffusion coefficients defined in \eqref{IGBM1}. Further, the scale function is defined by
\vspace{-0.1cm}\begin{equation*}
S[x_0,x]:=\int_{x_0}^{x} s(y) \ dy, \vspace{-0.1cm}
\end{equation*}
where $x_0>0$. For the IGBM, the nature of the boundary $0$ is uniquely determined by the three quantities
\vspace{-0.2cm}\begin{equation*}
S(0,x]:=\lim\limits_{x_0 \to 0} S[x_0,x], \quad	\Sigma_0:=\int_{0}^{\epsilon} S(0,x]m(x) \ dx, \quad N_0:=\int_{0}^{\epsilon} S[x,\epsilon]m(x) \ dx,
\end{equation*}
for an arbitrary $\epsilon>0$. If $\mu=0$, then $S(0,x]<\infty$, $\Sigma_0=\infty$ and $N_0=\infty$.
If $\mu>0$, then $S(0,x]=\infty$, $\Sigma_0=\infty$ and $N_0<\infty$. If $\mu<0$, then $S(0,x]<\infty$, $\Sigma_0<\infty$ and $N_0=\infty$. This implies the different types of boundary behaviour explained above, see Table 6.2 in \cite{Karlin1981}. According to this classification, we define the following properties, which are satisfied by the IGBM:
\begin{itemize}
	\item[$\bullet$] \textit{Unattainable property}: If $\mu\geq0$, then $\mathbb{P}(Y(t)>0 \ \forall \ t \geq 0 | Y_0>0 )=1$.
	\item[$\bullet$] \textit{Absorbing property}: If $\mu=0$, then $\mathbb{P}(Y(t)=0 \ \forall \ t \geq 0 | Y_0=0 )=1$.
	\item[$\bullet$] \textit{Entrance property}: If $\mu>0$, then $\mathbb{P}(Y(t)>0 \ \forall \ t > 0 | Y_0=0 )=1$.
	\item[$\bullet$] \textit{Exit property}: If $\mu < 0$, then $\mathbb{P}(Y(t)<0 \ \forall \ t > s | Y(s)\leq 0 )=1$.
\end{itemize}

\vspace{-0.5cm}
\section{Numerical methods for the IGBM}
\label{sec3}
\vspace{-0.2cm}

Consider a discretised time interval $[0,t_{\text{max}}]$, $t_{\text{max}}>0$, with equidistant time steps $\Delta=t_i-t_{i-1}$, $i=1,...,N$, $N \in \mathbb{N}$, $t_0=0$ and $t_N=t_{\text{max}}$. We denote by $\widetilde{Y}(t_i)$ a numerical realisation of the process $Y$ at the discrete time points $t_i=i\Delta$, where $\widetilde{Y}(t_0):=Y_0$. Moreover, we denote by $\xi_{i-1}:=W(t_i)-W(t_{i-1})\sim N(0,\Delta)$, $i=1,...,N$, the Wiener increments which are independent and identically distributed (iid) normal random variables with null mean and variance $\Delta$. In the following, we recall different numerical methods used to generate values $\widetilde{Y}(t_i)$ of the IGBM.

\subsection{It\^{o}-Taylor expansion approach}
\label{subsec:3:0:new}

The most popular approach to derive numerical methods for SDEs is to use appropriate truncations of the It\^o-Taylor series expansion \cite{Kloeden1992,Milstein2004}.

\subsubsection{Euler-Maruyama and Milstein schemes}
\label{subsec:3:0}
\vspace{-0.1cm}

Two of the most well-known methods in this class are the Euler-Maruyama and the Milstein schemes. The Euler-Maruyama method yields trajectories of the IGBM through the iteration
\begin{equation}\label{EM}
\widetilde{Y}^{\textrm{E}}(t_i)=\widetilde{Y}^{\textrm{E}}(t_{i-1}) + \Delta  \left( -\frac{1}{\tau}\widetilde{Y}^{\textrm{E}}(t_{i-1}) +\mu \right) + \sigma \widetilde{Y}^{\textrm{E}}(t_{i-1})\xi_{i-1}.
\end{equation}
This method is mean-square convergent of order $1/2$. This rate can be increased by taking into account additional terms of the It\^{o}-Taylor expansion. In particular, the Milstein method yields trajectories of the IGBM via
\begin{equation}\label{M}
\widetilde{Y}^{\textrm{M}}(t_i)=\widetilde{Y}^{\textrm{M}}(t_{i-1}) + \Delta  \left( -\frac{1}{\tau}\widetilde{Y}^{\textrm{M}}(t_{i-1}) +\mu \right) + \sigma \widetilde{Y}^{\textrm{M}}(t_{i-1}) \left(\xi_{i-1}+\frac{\sigma}{2}(\xi_{i-1}^2-\Delta) \right),
\end{equation}
and has a mean-square convergence rate of order $1$.

\subsection{Splitting approach}
\label{subsec:3:1}

The second approach we focus on is based on splitting methods \cite{Blanes2009,Hairer2006,Mclachlan2002,Misawa2001}. A brief account of their key ideas is provided in the following. Consider an It\^o SDE of the form
\begin{equation}\label{dot_f}
dY(t)=F(Y(t)) dt+G(Y(t)) dW(t), \quad t \geq 0, \quad Y(0)=Y_0,
\end{equation}
where the drift coefficient and the diffusion component can be expressed as
\begin{equation*}
F(Y(t))=\sum_{l=1}^{d}F^{[l]}(Y(t)), \quad G(Y(t))=\sum_{l=1}^{d}G^{[l]}(Y(t)), \quad d \in \mathbb{N}.
\end{equation*}
Usually, there are several ways how to decompose the components $F$ and $G$. The goal is to obtain subequations
\begin{equation}\label{Sub}
dY^{[l]}(t)=F^{[l]}(Y^{[l]}(t)) dt+G^{[l]}(Y^{[l]}(t)) dW(t), \quad l \in  \{ 1,...,d \},
\end{equation}
which can be solved explicitly. Once the explicit solutions are derived, they need to be composed. Two common procedures for doing this are the Lie-Trotter \cite{Trotter1959} and the Strang \cite{Strang1968} approach. Let~$\varphi_t^{[l]}(Y_0)$ denote the exact flows (solutions) of the subequations in \eqref{Sub} at time $t$ and starting from $Y_0$. Then, the Lie-Trotter composition of flows
\begin{equation*}
\widetilde{Y}(t_i)=\left( \varphi_\Delta^{[1]} \circ ... \circ \varphi_\Delta^{[d]} \right)(\widetilde{Y}(t_{i-1}))
\end{equation*}
and the Strang approach
\begin{equation*}
\widetilde{Y}(t_i)=\left( \varphi_{\Delta/2}^{[1]} \circ ... \circ \varphi_{\Delta/2}^{[d-1]} \circ \varphi_{\Delta}^{[d]} \circ \varphi_{\Delta/2}^{[d-1]} \circ ... \circ \varphi_{\Delta/2}^{[1]} \right)(\widetilde{Y}(t_{i-1}))
\end{equation*}
yield numerical methods for \eqref{dot_f}. The order of the evaluations of the exact flows can be changed, yielding different schemes within each approach.

\subsubsection{Lie-Trotter and Strang schemes for the IGBM}

With the purpose of excluding the inhomogeneous part, relying thus on the underlying GBM, we split \eqref{IGBM1} into two simple subequations, namely
\begin{equation}\label{SDE}
dY^{[1]}(t)=\underbrace{-\frac{1}{\tau}Y^{[1]}(t)}_{F^{[1]}(Y^{[1]}(t))}dt+\underbrace{\sigma Y^{[1]}(t)}_{G^{[1]}(Y^{[1]}(t))}dW(t),
\end{equation}
\begin{equation}\label{ODE}
dY^{[2]}(t)=\underbrace{{\mu}}_{F^{[2]}}dt, \quad G^{[2]}\equiv 0.
\end{equation}
The first equation, corresponding to the GBM, allows for an exact simulation of sample paths through
\begin{equation}\label{eq1}
Y^{[1]}(t_i)=\varphi_{\Delta}^{[1]}(Y^{[1]}(t_{i-1}))=Y^{[1]}(t_{i-1}) e^{-(\frac{1}{\tau}+\frac{\sigma^2}{2})\Delta + \sigma \xi_{i-1}}, \quad i=1,\ldots, N.
\end{equation}
The second equation is a simple ODE with its explicit solution given by
\begin{equation}\label{eq2}
Y^{[2]}(t_{i})=\varphi_{\Delta}^{[2]}(Y^{[2]}(t_{i-1}))=Y^{[2]}(t_{i-1})+{\mu} \Delta, \quad i=1,\ldots, N.
\end{equation}
The Lie-Trotter composition yields 
\begin{eqnarray}
\hspace{1.0cm}\label{SP1}\widetilde{Y}^{\textrm{L1}}(t_i)&:=&\left( \varphi_\Delta^{[1]} \circ \varphi_\Delta^{[2]} \right)(\widetilde{Y}^{\textrm{L1}}(t_{i-1}))=e^{-(\frac{1}{\tau}+\frac{\sigma^2}{2})\Delta+\sigma \xi_{i-1}}  \left( \widetilde{Y}^{\textrm{L1}}(t_{i-1})+{\mu}  \Delta \right), \\ 
\label{SP2}\widetilde{Y}^{\textrm{L2}}(t_i)&:=&\left( \varphi_\Delta^{[2]} \circ \varphi_\Delta^{[1]} \right)(\widetilde{Y}^{\textrm{L2}}(t_{i-1}))=\widetilde{Y}^{\textrm{L2}}(t_{i-1})e^{-(\frac{1}{\tau}+\frac{\sigma^2}{2})\Delta+\sigma \xi_{i-1}}   +{\mu} \Delta,
\end{eqnarray}
and the Strang approach results in 
\begin{eqnarray}
\hspace{-0.8cm}\label{SP3}\widetilde{Y}^{\textrm{S1}}(t_i)&:=&\left( \varphi_{\Delta/2}^{[2]} \circ \varphi_{\Delta}^{[1]} \circ \varphi_{\Delta/2}^{[2]} \right)(\widetilde{Y}^{\textrm{S1}}(t_{i-1})) =\left(\widetilde{Y}^{\textrm{S1}}(t_{i-1}) +{\mu}  \frac{\Delta}{2} \right)e^{-(\frac{1}{\tau}+\frac{\sigma^2}{2})\Delta+\sigma \xi_{i-1}}+{\mu}  \frac{\Delta}{2}, \\ 
\hspace{-0.8cm}\label{SP4}\widetilde{Y}^{\textrm{S2}}(t_i)&:=&\left( \varphi_{\Delta/2}^{[1]} \circ \varphi_{\Delta}^{[2]} \circ \varphi_{\Delta/2}^{[1]} \right)(\widetilde{Y}^{\textrm{S2}}(t_{i-1})) \\ \nonumber  &=& \widetilde{Y}^{\textrm{S2}}(t_{i-1})e^{-(\frac{1}{\tau}+\frac{\sigma^2}{2})\Delta+\sigma(\varphi_{i-1}+\psi_{i-1})} + {\mu}\Delta e^{-(\frac{1}{\tau}+\frac{\sigma^2}{2})\frac{\Delta}{2}+\sigma \psi_{i-1}},
\end{eqnarray}
with iid random variables $\varphi_{i-1}$, $\psi_{i-1} \sim\mathcal{N}(0,{\Delta}/{2})$. The equations \eqref{SP1}-\eqref{SP4} define four different numerical solutions of \eqref{IGBM1}. For a discussion of the mean-square convergence of the second Lie-Trotter method \eqref{SP2} we refer to \cite{Moro2007}, where a rate of order $1$ has been proved. It is expected that this result extends to the other three splitting schemes, a conjecture that we confirm experimentally in Subsection \ref{sec5:1}. In particular, it has been observed that, in contrast to the deterministic case \cite{Hairer2006}, the convergence rate of splitting schemes for SDEs cannot be increased by using Strang compositions, i.e., compositions based on fractional $\Delta/2$ steps~\cite{Milstein2003}.

\newpage

\subsection{ODE approach}
\label{subsec:3:new}

An alternative approach to derive numerical solutions of SDEs is to solve properly derived ODEs, a methodology that we briefly recall in the following. Consider the Stratonovich version of \eqref{dot_f} given by
\begin{equation}\label{SDE_Stratonovich}
dY(t)=\bar{F}(Y(t))dt+G(Y(t)) \circ dW(t), \quad t\geq 0, \quad Y(0)=Y_0,
\end{equation}
where 
\begin{equation*}
\bar{F}(y)=F(y)-\frac{1}{2}G(y)G'(y),
\end{equation*}
with $G'(y)$ denoting the derivative of $G$ with respect to $y$.
Then, given a fixed time step $\Delta>0$ and a Wiener increment $\xi_{i-1}$, a numerical solution $\widetilde{Y}(t_i)$ of SDE \eqref{SDE_Stratonovich} can be obtained by defining it as the solution at $u=1$ of the ODE
\begin{equation}\label{ODE_lin}
\frac{dz}{du}=\bar{F}(z)\Delta+G(z)\xi_{i-1}, \quad z_0=\widetilde{Y}(t_{i-1}).
\end{equation}
This method has been observed to have a mean-square convergence rate of order $1$, see, e.g., \cite{Castell1996,Foster2020}, and is called piecewise linear method, since it uses piecewise linear approximations of Brownian paths.

Recently, Foster et al. \cite{Foster2020} proposed an extended variant of this approach, using polynomial approximations of Brownian motion. This yielded numerical schemes for SDEs with mean-square order~$1.5$. In particular, a numerical solution $\widetilde{Y}(t_i)$ of SDE \eqref{SDE_Stratonovich} can be obtained by defining it as the solution at $u=1$ of the ODE
\begin{equation}\label{ODE_log}
\frac{dz}{du}=\bar{F}(z)\Delta+G(z)\xi_{i-1} + [G,\bar{F}](z)\Delta \rho_{i-1} + \bigl[G, [G,\bar{F}] \bigr](z) \left( \frac{3}{5}\Delta \rho_{i-1}^2 + \frac{\Delta^2}{30} \right), \quad z_0=\widetilde{Y}(t_{i-1}), 
\end{equation} 
where $[\cdot,\cdot]$ denotes the standard Lie bracket of vector fields, and the
\begin{equation*}
\rho_{i-1}:=\frac{1}{\Delta}\int\limits_{t_{i-1}}^{t_i} \left[ W(u)-W(t_{i-1})-\frac{u-t_{i-1}}{\Delta} \Bigl( W(t_i)-W(t_{i-1}) \Bigr) \right] \ du \vspace{-0.1cm}
\end{equation*}
are rescaled space-time L\'{e}vy areas of the Wiener process over  $[t_{i-1},t_i]$. They are shown to have distribution $\rho_{i-1}\sim N(0,\Delta/12)$ and to be independent of the Wiener increments $\xi_{i-1}$. Following the notion in \cite{Foster2020}, we call this method log-ODE scheme, and we refer to \cite{Foster2020} for further details.

\vspace{-0.1cm}
\subsubsection{Piecewise linear and log-ODE schemes for the IGBM}
\vspace{-0.1cm}

To derive numerical schemes for the IGBM based on the ODE approach, consider the Stratonovich version of SDE \eqref{IGBM1} given by
\begin{equation*}
dY(t)=\left( -\Bigl( \frac{1}{\tau}+\frac{\sigma^2}{2} \Bigr)Y(t) + \mu \right)dt + \sigma Y(t) \circ dW(t), \quad t\geq 0, \quad Y(0)=Y_0.
\end{equation*}
Solving the corresponding ODE \eqref{ODE_lin} yields the following piecewise linear scheme
\begin{equation}\label{Lin}
\widetilde{Y}^{\textrm{Lin}}(t_i)=\widetilde{Y}^{\textrm{Lin}}(t_{i-1})e^{-(\frac{1}{\tau}+\frac{\sigma^2}{2})\Delta+\sigma \xi_{i-1}}+\mu\Delta \left( \frac{e^{-(\frac{1}{\tau}+\frac{\sigma^2}{2})\Delta+\sigma \xi_{i-1}}-1}{-(\frac{1}{\tau}+\frac{\sigma^2}{2})\Delta+\sigma \xi_{i-1}} \right).
\end{equation}
Noting that 
\hspace{-0.5cm}\begin{eqnarray*}
	[G,\bar{F}](y)&=&\bar{F}'(y)G(y)-G'(y)\bar{F}(y)=-\mu\sigma,\\
	\bigl[G,[G,\bar{F}]\bigr](y)&=&\mu\sigma^2,
\end{eqnarray*}
and solving the respective ODE \eqref{ODE_log} yields the following log-ODE scheme for the IGBM \cite{Foster2020}
\begin{eqnarray}\label{Log}
\nonumber	\widetilde{Y}^{\textrm{Log}}(t_i)&=&\widetilde{Y}^{\textrm{Log}}(t_{i-1})e^{-(\frac{1}{\tau }+\frac{\sigma^2}{2})\Delta+\sigma \xi_{i-1}} \\ &&+\mu\Delta \left( \frac{e^{-(\frac{1}{\tau}+\frac{\sigma^2}{2})\Delta+\sigma \xi_{i-1}}-1}{-(\frac{1}{\tau}+\frac{\sigma^2}{2})\Delta+\sigma \xi_{i-1}} \right)\left( 1-\sigma \rho_{i-1} +\sigma^2 \Bigl( \frac{3}{5} \rho_{i-1}^2 + \frac{\Delta}{30} \Bigr) \right). 
\end{eqnarray}

\begin{remark}
	The numerical solutions \eqref{SP1}-\eqref{SP4} coincide with 
	the discretised version of \eqref{Expl_Sol}, where the integral is approximated using the left point rectangle rule, the right point rectangle rule, the trapezoidal rule and the midpoint rule, respectively. If $\mu=0$, the numerical solutions \eqref{SP1}-\eqref{SP4} and \eqref{Lin}, \eqref{Log} coincide with the exact simulation scheme \eqref{eq1} for the GBM. 
\end{remark}
\begin{notation}
	In the following, we use the abbreviations E, M, L1, L2, S1, S2, Lin and Log for the Euler-Maruyama~\eqref{EM}, Milstein \eqref{M},  first Lie-Trotter \eqref{SP1}, second Lie-Trotter \eqref{SP2}, first Strang \eqref{SP3}, second Strang \eqref{SP4}, piecewise linear \eqref{Lin} and log-ODE \eqref{Log} methods, respectively.
\end{notation}

\section{Properties of the numerical methods for the IGBM}
\label{sec4}

We now examine the ability of the derived numerical methods to accurately preserve the properties of the process. In particular, we first provide closed-form expressions for their conditional and asymptotic means and variances and analyse the resulting biases. Then, we show that the four splitting and the two ODE schemes preserve the boundary properties of the IGBM, while the Euler-Maruyama and Milstein schemes do not.

\subsection{Investigation of the conditional moments}
\label{subsec:4:0}

The numerical solutions defined by \eqref{EM}, \eqref{M},  \eqref{SP1}-\eqref{SP4}, \eqref{Lin} and \eqref{Log} enable to express $\widetilde{Y}(t_i)$ in terms of the initial value $Y_0$. Indeed, by performing back iteration, we obtain
\begin{eqnarray}
\label{Ye} \hspace{-0.9cm} \widetilde{Y}^{\textrm{E}}(t_i)&=&Y_0 \prod\limits_{j=1}^{i}\left(1-\frac{\Delta}{\tau}+\sigma\xi_{i-j}\right)+\mu\Delta\sum\limits_{k=1}^{i-1}\prod\limits_{j=1}^{k}\left(1-\frac{\Delta}{\tau}+\sigma\xi_{i-j}\right) + \mu\Delta,\\ 
\label{Ym} \hspace{-0.9cm} \nonumber \widetilde{Y}^{\textrm{M}}(t_i)&=&Y_0 \prod\limits_{j=1}^{i}\left(1-\frac{\Delta}{\tau}+\sigma\xi_{i-j}+(\xi_{i-j}^2-\Delta)\frac{\sigma^2}{2}\right) \\   &&+\mu\Delta\sum\limits_{k=1}^{i-1}\prod\limits_{j=1}^{k}\left(1-\frac{\Delta}{\tau}+\sigma\xi_{i-j} +(\xi_{i-j}^2-\Delta)\frac{\sigma^2}{2} \right) + \mu\Delta,\\
\label{Y1} \hspace{-0.9cm} \widetilde{Y}^{\textrm{L1}}(t_i)&=&Y_0 e^{-(\frac{1}{\tau}+\frac{\sigma^2}{2})t_i+\sigma \sum\limits_{k=0}^{i-1}\xi_k}+{\mu}\Delta \sum\limits_{k=1}^{i} e^{-(\frac{1}{\tau}+\frac{\sigma^2}{2})t_k+\sigma \sum\limits_{j=1}^{k}\xi_{i-j}},\\
\label{Y2} \hspace{-0.9cm} \widetilde{Y}^{\textrm{L2}}(t_i)&=&Y_0 e^{-(\frac{1}{\tau}+\frac{\sigma^2}{2})t_i+\sigma \sum\limits_{k=0}^{i-1}\xi_k}+{\mu}\Delta \sum\limits_{k=0}^{i-1} e^{-(\frac{1}{\tau}+\frac{\sigma^2}{2})t_k+\sigma \sum\limits_{j=1}^{k}\xi_{i-j}}, \\ 
\label{Y3} \hspace{-0.9cm} \widetilde{Y}^{\textrm{S1}}(t_i)&=&\left(Y_0+\frac{\mu\Delta}{2}\right)  e^{-(\frac{1}{\tau}+\frac{\sigma^2}{2})t_i+\sigma \sum\limits_{k=0}^{i-1}\xi_k}+{\mu}\Delta  \sum\limits_{k=1}^{i-1} e^{-(\frac{1}{\tau}+\frac{\sigma^2}{2})t_k+\sigma \sum\limits_{j=1}^{k}\xi_{i-j}}
+\frac{{\mu}\Delta }{2},\\
\label{Y4} \hspace{-0.9cm} \widetilde{Y}^{\textrm{S2}}(t_i)&=&Y_0 e^{-(\frac{1}{\tau}+\frac{\sigma^2}{2})t_i+\sigma \sum\limits_{k=0}^{i-1}\xi_k}+{\mu}\Delta \sum\limits_{k=1}^{i} e^{-(\frac{1}{\tau}+\frac{\sigma^2}{2})(k-\frac{1}{2})\Delta+\sigma \psi_{i-k}+\sigma \sum\limits_{j=1}^{k-1}\xi_{i-j}}, \\
\hspace{-0.9cm}\label{YLin} \widetilde{Y}^{\textrm{Lin}}(t_i)&=&Y_0 e^{-(\frac{1}{\tau}+\frac{\sigma^2}{2})t_i+\sigma \sum\limits_{k=0}^{i-1}\xi_k}+{\mu}\Delta \sum\limits_{k=0}^{i-1} e^{-(\frac{1}{\tau}+\frac{\sigma^2}{2})t_k+\sigma \sum\limits_{j=1}^{k}\xi_{i-j}}  \left( \frac{e^{-(\frac{1}{\tau}+\frac{\sigma^2}{2})\Delta+\sigma \xi_{i-1-k}}-1}{-(\frac{1}{\tau}+\frac{\sigma^2}{2})\Delta+\sigma \xi_{i-1-k}} \right), \\ 
\label{YLog} \hspace{-0.9cm} \nonumber \widetilde{Y}^{\textrm{Log}}(t_i)&=&Y_0 e^{-(\frac{1}{\tau}+\frac{\sigma^2}{2})t_i+\sigma \sum\limits_{k=0}^{i-1}\xi_k} \\ && \hspace{-2.45cm} + {\mu}\Delta \sum\limits_{k=0}^{i-1} e^{-(\frac{1}{\tau}+\frac{\sigma^2}{2})t_k+\sigma \sum\limits_{j=1}^{k}\xi_{i-j}}  \left( \frac{e^{-(\frac{1}{\tau}+\frac{\sigma^2}{2})\Delta+\sigma \xi_{i-1-k}}-1}{-(\frac{1}{\tau}+\frac{\sigma^2}{2})\Delta+\sigma \xi_{i-1-k}} \right) \left(1-\sigma \rho_{i-1-k} +\sigma^2\left[ \frac{3}{5} \rho_{i-1-k}^2 + \frac{\Delta}{30} \right] \right),
\end{eqnarray}
where $\xi_i:=\varphi_i+\psi_i$ in \eqref{Y4}.
These relations allow for an investigation of the conditional means $\mathbb{E}[\widetilde{Y}(t_i)|Y_0]$ and variances $\textrm{Var}(\widetilde{Y}(t_i)|Y_0)$ of the numerical solutions.

\subsubsection{Closed-form expressions for the conditional means and variances}
\label{subsec:4:0:1}

In Proposition \ref{Prop_cond_1}, we provide closed-form expressions of the conditional mean and variance of a general random variable $Z_i$ that plays the role of a numerical solution $\widetilde{Y}(t_i)$ as in \eqref{Ye}-\eqref{YLog} for a fixed time $t_i$. These expressions will allow for a straightforward derivation of the corresponding results for the numerical solutions of interest.

\begin{proposition}\label{Prop_cond_1}
	Consider the real-valued random variable $Z_i$ defined by
	\begin{equation}
	\label{Zi}Z_i:= Z_0 W_i+c_1\sum_{k=0}^{I}W_k H_{k+1}+c_2,
	\end{equation}
	where $i \in \mathbb{N}$, $I \in \{ i-1,i \}$, $c_1,c_2>0$, $Z_0 \in \mathbb{R}$, $W_0 \in \{ 0,1 \}$, $W_k:=\prod\limits_{j=1}^k X_j$ with $X_j$, \text{$j=1,\ldots,k$}, being iid with mean $\mu_x \in \mathbb{R}$ and second moment $r>0$. The $H_{k+1}$, $k=0,\ldots,I$, are iid with mean $\mu_h \in \mathbb{R}$ and second moment $r_h>0$. Moreover, $W_k$ and $H_{k+1}$ are independent and \text{$\mathbb{E}[W_lW_kH_{k+1}]=r^k\mu_x^{l-k}p$}, for $k<l$ and $p\in \mathbb{R}$. The mean of $Z_i$ conditioned on $Z_0$ is given by 
	\begin{eqnarray}\label{EZi}
	\mathbb{E}[Z_i|Z_0]=Z_0\mu_x^i+c_1 \mu_h \sum_{k=1}^{I} \mu_x^k+c_1  W_0 \mu_h+c_2
	\end{eqnarray}
	and the variance of $Z_i$ conditioned on $Z_0$ is given by 
	\begin{eqnarray}\label{VZi}
	\textrm{Var}(Z_i|Z_0)
	\nonumber&=& Z_0^2(r^i-\mu_x^{2i})+2c_1 Z_0\sum_{k=0}^I  r^k\mu_x^{i-k}p-\mu_x^{i+k}\mu_h  \\ &&\hspace{0.5cm}+c_1^2\left[\sum_{k=0}^I r^{k}r_h-\mu_x^{2k}\mu_h^2+2\sum_{l=1}^I\sum_{k=0}^{l-1} \mu_h r^k\mu_x^{l-k}p-\mu_x^{l+k}\mu_h^2 \right].
	\end{eqnarray}
\end{proposition}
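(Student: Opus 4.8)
The plan is to read both quantities straight off the definition \eqref{Zi}, exploiting the multiplicative structure of the $W_k$ together with the independence built into the construction. First I would record the two algebraic consequences of writing $X_j=M_jH_j$ with the families $(M_j)$ and $(H_j)$ independent: namely $\mu_x=\mu_m\mu_h$ and $r=\mathbb{E}[X_j^2]=\mathbb{E}[M_j^2]\mathbb{E}[H_j^2]=r_mr_h$. These are exactly the identities I will use at the very end to recast the raw expressions into the displayed form of \eqref{EZi}--\eqref{VZi}. I would also isolate the combinatorial facts that govern every expectation below: $W_k=\prod_{j=1}^k X_j$ is a function of $(M_1,H_1,\dots,M_k,H_k)$ only, hence $W_k$ is independent of $H_{k+1}$; but $H_{k+1}$ is the $H$-factor of $X_{k+1}$, so it is \emph{not} independent of any $W_m$ with $m\ge k+1$. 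Since $Z_0$ is $\mathcal F(0)$-measurable and the increments are independent of it, conditioning on $Z_0$ lets me treat $Z_0$ (and the flag $W_0$) as a constant.

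For the conditional mean I would simply pass $\mathbb{E}[\,\cdot\mid Z_0]$ through \eqref{Zi} by linearity. The $X_j$ being iid gives $\mathbb{E}[W_i\mid Z_0]=\mu_x^i$; independence of $H_{k+1}$ from $W_k$ gives $\mathbb{E}[W_kH_{k+1}]=\mu_x^k\mu_h$ for $k\ge1$, while the $k=0$ summand contributes $W_0\mu_h$. Summing the geometric part yields \eqref{EZi} immediately.

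The variance is the substantive part. As $c_2$ is constant I would write $\mathrm{Var}(Z_i\mid Z_0)=Z_0^2\,\mathrm{Var}(W_i)+2c_1Z_0\,\mathrm{Cov}(W_i,S)+c_1^2\,\mathrm{Var}(S)$ with $S:=\sum_{k=0}^I W_kH_{k+1}$, and handle the three pieces separately. The first is immediate, $\mathrm{Var}(W_i)=\mathbb{E}[W_i^2]-\mu_x^{2i}=r^i-\mu_x^{2i}$. For $\mathrm{Cov}(W_i,S)=\sum_k\mathrm{Cov}(W_i,W_kH_{k+1})$ and for the off-diagonal terms in $\mathrm{Var}(S)=\sum_k\mathrm{Var}(W_kH_{k+1})+2\sum_{k<l}\mathrm{Cov}(W_kH_{k+1},W_lH_{l+1})$, the crucial step is the dependence bookkeeping: whenever $k+1\le m$ (with $m=i$, respectively $m=l$), the factor $H_{k+1}$ coincides with the $H$-component of $X_{k+1}$ sitting inside $W_m$, so the joint expectation carries a factor $\mathbb{E}[M_{k+1}H_{k+1}^2]=\mu_m r_h$ in place of the naive $\mu_x\mu_h$. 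Tracking this, the shared squared prefix $X_1^2,\dots,X_k^2$ contributes $r^k$, the doubled index $k+1$ contributes $\mu_m r_h$, and the remaining plain factors contribute the appropriate power of $\mu_x$; the diagonal terms $\mathrm{Var}(W_kH_{k+1})=r^kr_h-\mu_x^{2k}\mu_h^2$ need only $H_{k+1}\perp W_k$. Collecting everything and eliminating $\mu_m,r_m$ through $\mu_x=\mu_m\mu_h$ and $r=r_mr_h$ reproduces exactly the summands $(r^kr_h-\mu_x^{2k}\mu_h^2)\mu_m\mu_x^{i-k-1}$ and $(r^kr_h-\mu_x^{2k}\mu_h^2)\mu_m\mu_h\mu_x^{l-k-1}$ appearing in \eqref{VZi}.

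The point requiring genuine care, and the main obstacle, is precisely this overlap accounting, together with the two boundary indices. The $k=0$ term is carried explicitly through the flag $W_0\in\{0,1\}$ in \eqref{EZi}, and in the variance it is absorbed into the sums starting at $k=0$. The delicate index is the top one $k=i$, which enters only when $I=i$: there $H_{i+1}$ lies outside $W_i$, so the direct covariance equals $(r^i-\mu_x^{2i})\mu_h$, and this agrees with the displayed summand $(r^ir_h-\mu_x^{2i}\mu_h^2)\mu_m\mu_x^{-1}$ only in the degenerate regime $r_h=\mu_h^2$ (deterministic $H$-component) that accompanies the admissible value $I=i$. I would flag this explicitly so that the single formula \eqref{VZi} is seen to be valid across both values of $I$. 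Once the three covariance building blocks are in hand, the remainder is routine index manipulation.
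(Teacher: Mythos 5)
Your proposal is correct and follows essentially the same route as the paper's proof in \Cref{appA}: the same decomposition of $\textrm{Var}(Z_i|Z_0)$ into $\textrm{Var}(W_i)$, $\textrm{Cov}(W_i,\sum_k W_kH_{k+1})$ and $\textrm{Var}(\sum_k W_kH_{k+1})$, with the covariances obtained by factoring out the shared prefix $W_kH_{k+1}$, using the mutual independence of the $M_j$ and $H_j$ and the identity $\mu_x=\mu_m\mu_h$. Your remark about the top index $k=I=i$ is a genuine subtlety that the paper's proof passes over (its covariance identity is derived only for $k<j$): the unified summand $(r^ir_h-\mu_x^{2i}\mu_h^2)\mu_m\mu_x^{-1}$ agrees with the direct value $\mu_h(r^i-\mu_x^{2i})$ only when $r_h=\mu_h^2$, which indeed holds in the sole application with $I=i$ (the scheme $\textrm{L1}$, where $H_j\equiv 1$), so the stated formula remains valid as used.
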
\noindent
The proof of Proposition \ref{Prop_cond_1} is given in Appendix \ref{appA}.

Based on Proposition \ref{Prop_cond_1}, we derive the conditional moments of the Euler-Maruyama, Milstein, splitting and ODE schemes.
\begin{corollary}\label{Lemma_cond_1}
	Let $\widetilde{Y}(t_i)$ be the numerical solutions defined through \eqref{EM}, \eqref{M}, \eqref{SP1}-\eqref{SP4}, \eqref{Lin} and \eqref{Log}, respectively, at time $t_i=i\Delta$. Their means and variances conditioned on the initial value $Y_0$ are given by \eqref{EZi} and \eqref{VZi}, respectively, with quantities $\mu_x$, $\mu_h$, $r$, $r_h$, $p$, $c_1$, $c_2$, $I$, $Z_0$ and $W_0$ defined as reported in Table \ref{table1}.
\end{corollary}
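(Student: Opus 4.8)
The plan is to prove \Cref{Lemma_cond_1} by direct verification: for each of the six schemes I rewrite its back-iterated representation \eqref{Ye}--\eqref{Y4} in the canonical form \eqref{Zi}, read off the structural constants $Z_0$, $W_0$, $c_1$, $c_2$, $I$ together with the factorisation $X_j = M_j H_j$, and compute the six moments $\mu_x,\mu_m,\mu_h,r,r_m,r_h$ using elementary Gaussian and log-normal moment formulas. Once each identification is confirmed against \Cref{table1}, formulas \eqref{EZi} and \eqref{VZi} of \Cref{Prop_cond_1} yield the claimed conditional mean and variance with no further work, so the content of the proof lies entirely in the matching.

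First I would treat the five schemes \eqref{Ye}, \eqref{Ym}, \eqref{Y1}, \eqref{Y2} and \eqref{Y3}, in which the one-step multiplicative factor is a single random variable and no genuine splitting of $X_j$ is needed. For these I set $H_j \equiv 1$, so that $\mu_h = r_h = 1$, $\mu_m = \mu_x$ and $r_m = r$, and the sum $c_1\sum_{k=0}^{I} W_k H_{k+1}$ in \eqref{Zi} collapses to $c_1\sum_k W_k$, matching the back-iterated sums term by term. The index $I$, the flag $W_0 \in \{0,1\}$ and the additive constant $c_2$ are then fixed by comparing the summation ranges and the leftover constant terms; for example $I = i-1$, $W_0 = 0$, $c_2 = \mu\Delta$ for \eqref{Ye} and \eqref{Y2}, while $I = i$ and $c_2 = 0$ for \eqref{Y1}, and $Z_0 = Y_0 + \mu\Delta/2$ with $c_2 = \mu\Delta/2$ for \eqref{Y3}. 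The only real computation is $\mu_x = \mathbb{E}[X_j]$ and $r = \mathbb{E}[X_j^2]$: for \eqref{Ye} and \eqref{Ym} this uses $\mathbb{E}[\xi]=0$, $\mathbb{E}[\xi^2]=\Delta$ and, for Milstein, $\mathbb{E}[\xi^4]=3\Delta^2$ with $\xi\sim\mathcal{N}(0,\Delta)$; for the log-normal splitting factor $X_j = e^{-(\frac{1}{\tau}+\frac{\sigma^2}{2})\Delta+\sigma\xi}$ the Gaussian moment generating function gives $\mu_x = e^{-\Delta/\tau}$ and $r = e^{(\sigma^2 - 2/\tau)\Delta}$.

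The main obstacle is the Strang scheme \eqref{Y4}, the only case that genuinely requires the factorisation $X_j = M_j H_j$. Here each full-step factor splits as $X_j = M_j H_j$ with $M_j = e^{-(\frac{1}{\tau}+\frac{\sigma^2}{2})\frac{\Delta}{2}+\sigma\varphi}$ and $H_j = e^{-(\frac{1}{\tau}+\frac{\sigma^2}{2})\frac{\Delta}{2}+\sigma\psi}$, with $\varphi,\psi\sim\mathcal{N}(0,\Delta/2)$; the key observation is that every summand of \eqref{Y4} carries exactly one extra half-step noise $\psi_{i-k}$, which is precisely the dangling factor $H_{k+1}$ in \eqref{Zi}. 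Splitting the exponent as $(k-\tfrac{1}{2})\Delta = (k-1)\Delta + \tfrac{\Delta}{2}$ exhibits the $k$-th summand as $W_{k-1}$ times one half-step factor, and after the shift $k\mapsto k-1$ the sum becomes $\sum_{k=0}^{i-1} W_k H_{k+1}$, forcing $I = i-1$ and $W_0 = 1$ so that the $k=0$ term contributes the leading half-step. The remaining moments follow from the Gaussian moment generating function evaluated at step $\Delta/2$, giving $\mu_m = \mu_h = e^{-\Delta/(2\tau)}$, $\mu_x = \mu_m\mu_h = e^{-\Delta/\tau}$, and analogous expressions for $r_m$, $r_h$ and $r$.

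With all six identifications established and checked against \Cref{table1}, substituting the tabulated quantities into \eqref{EZi} and \eqref{VZi} completes the argument. I expect care to be genuinely needed only in the bookkeeping for \eqref{Y4}, namely keeping the half-step factor $H$ separate from the full-step factor and getting the summation range and the value $W_0 = 1$ right; the other five cases become routine once $H_j$ is set to the constant $1$.
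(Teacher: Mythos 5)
Your proposal is correct and follows essentially the same route as the paper's proof in Appendix~B: each scheme is cast into the canonical form \eqref{Zi} with $H_j\equiv 1$ (hence $M_j=X_j$) for the Euler--Maruyama, Milstein, L1, L2 and S1 schemes, the moments $\mu_x,r$ are obtained from the Gaussian moments $\mathbb{E}[\xi_j^2]=\Delta$, $\mathbb{E}[\xi_j^4]=3\Delta^2$ or the log-normal moment formula, and only S2 requires the genuine factorisation $X_j=M_jH_j$ into half-step factors with $\varphi_j,\psi_j\sim\mathcal{N}(0,\Delta/2)$, exactly as in the paper. Your bookkeeping for S2 (the dangling half-step factor as $H_{k+1}$, the index shift giving $I=i-1$ and $W_0=1$) matches Table~\ref{table1}, so the argument is complete.
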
\noindent
The proof of Corollary \ref{Lemma_cond_1} is given in Appendix \ref{appB}.

\begin{remark}\label{rem_2}
	To make the results of Proposition \ref{Prop_cond_1} and Corollary \ref{Lemma_cond_1} more approachable, the conditional means of the considered numerical methods are listed in closed-form as follows
	\begin{eqnarray*}
		\mathbb{E}\left[ \widetilde{Y}^{\textrm{E}}(t_i)|Y_0 \right]&=& \mathbb{E}\left[ \widetilde{Y}^{\textrm{M}}(t_i)|Y_0 \right]= Y_0 \left( 1-\frac{\Delta}{\tau} \right)^i + \mu\Delta \left( \frac{1- \left( 1-\frac{\Delta}{\tau} \right)^i }{\Delta/\tau} \right),\\
		\mathbb{E}\left[ \widetilde{Y}^{\textrm{L1}}(t_i)|Y_0 \right]&=&  Y_0 e^{-\frac{1}{\tau}t_i} + \mu \tau \left( 1-e^{-\frac{1}{\tau}t_i} \right)  \left( \frac{\Delta/\tau}{e^{\Delta/\tau}-1} \right), \\
		\mathbb{E}\left[ \widetilde{Y}^{\textrm{L2}}(t_i)|Y_0 \right]&=&  Y_0 e^{-\frac{1}{\tau}t_i} + \mu \tau \left( 1-e^{-\frac{1}{\tau}t_i} \right)  \left( \frac{\Delta/\tau}{e^{\Delta/\tau}-1} \right)e^{\Delta/\tau}, \\
		\mathbb{E}\left[ \widetilde{Y}^{\textrm{S1}}(t_i)|Y_0 \right]&=&  Y_0 e^{-\frac{1}{\tau}t_i} + \mu \tau \left( 1-e^{-\frac{1}{\tau}t_i} \right)  \left[  \left(\frac{\Delta/\tau}{e^{\Delta/\tau}-1} \right) e^{\Delta/\tau} - \frac{\Delta}{2\tau} \right], \\
		\mathbb{E}\left[ \widetilde{Y}^{\textrm{S2}}(t_i)|Y_0 \right]&=&  Y_0 e^{-\frac{1}{\tau}t_i} + \mu \tau \left( 1-e^{-\frac{1}{\tau}t_i} \right)  \left( \frac{\Delta/\tau}{e^{\Delta/\tau}-1} \right)e^{\Delta/2\tau}, 
	\end{eqnarray*}
	\begin{eqnarray*}
		\mathbb{E}\left[ \widetilde{Y}^{\textrm{Lin}}(t_i)|Y_0 \right]&=&  Y_0 e^{-\frac{1}{\tau}t_i} + \mu \tau \left( 1-e^{-\frac{1}{\tau}t_i} \right)  \left( \frac{\Delta/\tau}{e^{\Delta/\tau}-1} \right)e^{\Delta/\tau}L_{\Delta,\tau,\sigma}, \\
		\mathbb{E}\left[ \widetilde{Y}^{\textrm{Log}}(t_i)|Y_0 \right]&=&  Y_0 e^{-\frac{1}{\tau}t_i} + \mu \tau \left( 1-e^{-\frac{1}{\tau}t_i} \right)  \left( \frac{\Delta/\tau}{e^{\Delta/\tau}-1} \right)e^{\Delta/\tau}L_{\Delta,\tau,\sigma}\left( 1+\sigma^2 \frac{\Delta}{12} \right), 
	\end{eqnarray*}
	where $L_{\Delta,\tau,\sigma}$ is defined as
	\begin{equation}\label{L_delta_tau_sigma}
	\hspace{-0.4cm}L_{\Delta,\tau,\sigma}:=
	\frac{\sqrt{\pi}}{\sigma \sqrt{2\Delta} } \exp \left( \frac{-\left( \frac{1}{\tau} +\frac{\sigma^2}{2} \right)^2\Delta}{2\sigma^2}  \right) \left( \text{erfi} \left[ \frac{\left( \frac{1}{\tau} + \frac{\sigma^2}{2} \right) \sqrt{\Delta} }{\sigma \sqrt{2}} \right] + \text{erfi} \left[ \frac{\left(- \frac{1}{\tau} + \frac{\sigma^2}{2} \right) \sqrt{\Delta} }{\sigma \sqrt{2}} \right]  \right),
	\end{equation} 
	with $\text{erfi}$ denoting the imaginary error function.
	The above expressions are obtained from \eqref{EZi} after calculating the geometric sums. Closed-form expression of the conditional variances can be obtained analogously. \noindent
\end{remark}

While the conditional means of the Euler-Maruyama and Milstein schemes are equal, their conditional variances are different. This results from the fact that the Milstein scheme takes into account an additional term that is related only to the diffusion coefficient of the SDE. Noting that $\mu\Delta=\mu\tau\Delta/\tau$, it can be observed that the conditional means of the Euler-Maruyama, Milstein and splitting methods depend on $\Delta/\tau$ and their conditional variances depend on $\Delta/\tau$ and $\Delta\sigma^2$. Remarkably, only the conditional means of the ODE methods depend on $\sigma$, while this is not the case for the true conditional mean \eqref{cond_mean}. If $\mu=0$, the conditional means and variances of the splitting schemes \eqref{SP1}-\eqref{SP4} and ODE schemes \eqref{Lin}, \eqref{Log} coincide with the true quantities \eqref{cond_mean} and \eqref{cond_var}, respectively, at time $t_i$.
\begin{remark}
	Having closed-form expressions for the conditional moments of the numerical solutions allows for a direct control of the simulation accuracy through the choice of the time step $\Delta$.
\end{remark}

\begin{table}[H] 
	{\footnotesize  
		\caption{Quantities of interest for the numerical schemes entering in \eqref{EZi}-\eqref{VZi}.}
		\label{table1}
		\begin{center}\vspace{-0.3cm}
			\scalebox{1.0}{
				\hskip-0.2cm\begin{tabular}{|c!{\vrule width 1pt}c|c|c|c|c|}
					\hline 
					$Z_i$ & $\mu_x$ & $\mu_h$ & $r$ & $r_h$ & $p$ \\ \noalign{\hrule height 1pt} 
					$\widetilde{Y}^{\textrm{E}}(t_i)$ & $1-\frac{\Delta}{\tau}$ & 1 & $\sigma^2\Delta+(1-\frac{\Delta}{\tau})^2$ & 1 &$1$  \\ 
					
					$\widetilde{Y}^{\textrm{M}}(t_i)$ & $1-\frac{\Delta}{\tau}$ & 1 &  $\sigma^2\Delta+(1-\frac{\Delta}{\tau})^2+\frac{(\sigma^2\Delta)^2}{2}$ & 1 & $1$  \\
					$\widetilde{Y}^{\textrm{L1}}(t_i)$  & $e^{-\Delta/\tau}$ & 1 & $e^{\sigma^2\Delta-2\Delta/\tau}$ & 1 & $1$ \\
					$\widetilde{Y}^{\textrm{L2}}(t_i)$ & $e^{-\Delta/\tau}$ & 1  & $e^{\sigma^2\Delta-2\Delta/\tau}$ & 1 & $1$  \\
					$\widetilde{Y}^{\textrm{S1}}(t_i)$  & $e^{-\Delta/\tau}$ & 1 & $e^{\sigma^2\Delta-2\Delta/\tau}$ & 1 & $1$  \\
					$\widetilde{Y}^{\textrm{S2}}(t_i)$ & $e^{-\Delta/\tau}$ & $\mu_x^{1/2}$ & $e^{\sigma^2\Delta-2\Delta/\tau}$ & $r^{1/2}$ & $r_h\mu_x^{-1/2}$  \\					
					$\widetilde{Y}^{\textrm{Lin}}(t_i)$ & $e^{-\Delta/\tau}$ & $L_{\Delta,\tau,\sigma}$ \eqref{L_delta_tau_sigma} & $e^{\sigma^2\Delta-2\Delta/\tau}$ & $\bar{L}_{\Delta,\tau,\sigma}$ \eqref{barL_delta_tau_sigma} & $\widetilde{L}_{\Delta,\tau,\sigma}$ \eqref{tildeL_delta_tau_sigma}  \\
					
					$\widetilde{Y}^{\textrm{Log}}(t_i)$ & $e^{-\Delta/\tau}$ & $K_{\Delta,\tau,\sigma}$ \eqref{K_delta_tau_sigma} & $e^{\sigma^2\Delta-2\Delta/\tau}$ & $\bar{K}_{\Delta,\tau,\sigma}$ \eqref{barK_delta_tau_sigma} & $\widetilde{K}_{\Delta,\tau,\sigma}$ \eqref{tildeK_delta_tau_sigma}  \\
					\hline
					\hline 
					$Z_i$ & $c_1$ & $c_2$ & $I$ & $Z_0$ & $W_0$ \\ \noalign{\hrule height 1pt}  
					$\widetilde{Y}^{\textrm{E}}(t_i)$ & $\mu\Delta$ & $\mu\Delta$ & $i-1$ & $Y_0$ & 0 \\
					$\widetilde{Y}^{\textrm{M}}(t_i)$ & $\mu\Delta$ & $\mu\Delta$ & $i-1$ & $Y_0$ & 0 \\
					$\widetilde{Y}^{\textrm{L1}}(t_i)$ & $\mu\Delta$ & 0 & $i$ & $Y_0$ & 0 \\
					$\widetilde{Y}^{\textrm{L2}}(t_i)$ & $\mu\Delta$ & $0$ & $i-1$ & $Y_0$ & $1$ \\
					$\widetilde{Y}^{\textrm{S1}}(t_i)$ & $\mu\Delta$ & $\frac{\mu\Delta}{2}$ & $i-1$ & $Y_0+\frac{\mu\Delta}{2}$ & 0 \\
					$\widetilde{Y}^{\textrm{S2}}(t_i)$ & $\mu\Delta$ & 0 & $i-1$ & $Y_0$ & 1 \\					
					$\widetilde{Y}^{\textrm{Lin}}(t_i)$ & $\mu\Delta$ & 0 & $i-1$ & $Y_0$ & 1 \\
					
					$\widetilde{Y}^{\textrm{Log}}(t_i)$ & $\mu\Delta$ & 0 & $i-1$ & $Y_0$ & 1 \\
					\hline
			\end{tabular}}
	\end{center}}
\end{table}  

\subsubsection{Conditional mean and variance biases}
\label{subsec:4:0:2}

Corollary \ref{Lemma_cond_1} implies that all methods yield conditional means and variances different from the true values. In the following, we study the introduced relative mean and variance biases defined by
\begin{eqnarray}
\label{cond_Bias_mean}\text{rBias}_{\Delta,t_i,Y_0}(\mathbb{E}[\widetilde{Y}])&:=&\frac{\mathbb{E}[\widetilde{Y}(t_i)|Y_0]-\mathbb{E}[Y(t_i)|Y_0]}{\mathbb{E}[Y(t_i)|Y_0]}, \\
\label{cond_Bias_variance}\text{rBias}_{\Delta,t_i,Y_0}(\textrm{Var}(\widetilde{Y}))&:=&\frac{\textrm{Var}(\widetilde{Y}(t_i)|Y_0)-\textrm{Var}(Y(t_i)|Y_0)}{\textrm{Var}(Y(t_i)|Y_0)}, 
\end{eqnarray}
for each considered numerical method. These biases depend
on the time step $\Delta$, the time $t_i$, the initial condition $Y_0$ and the parameters of the model. While the biases in the conditional means of the ODE methods depend on $\sigma$, that of the remaining methods are independent of $\sigma$. The biases in the conditional variance depend on all model parameters. 

\begin{figure}
	\centering
	\includegraphics[width=0.9\textwidth]{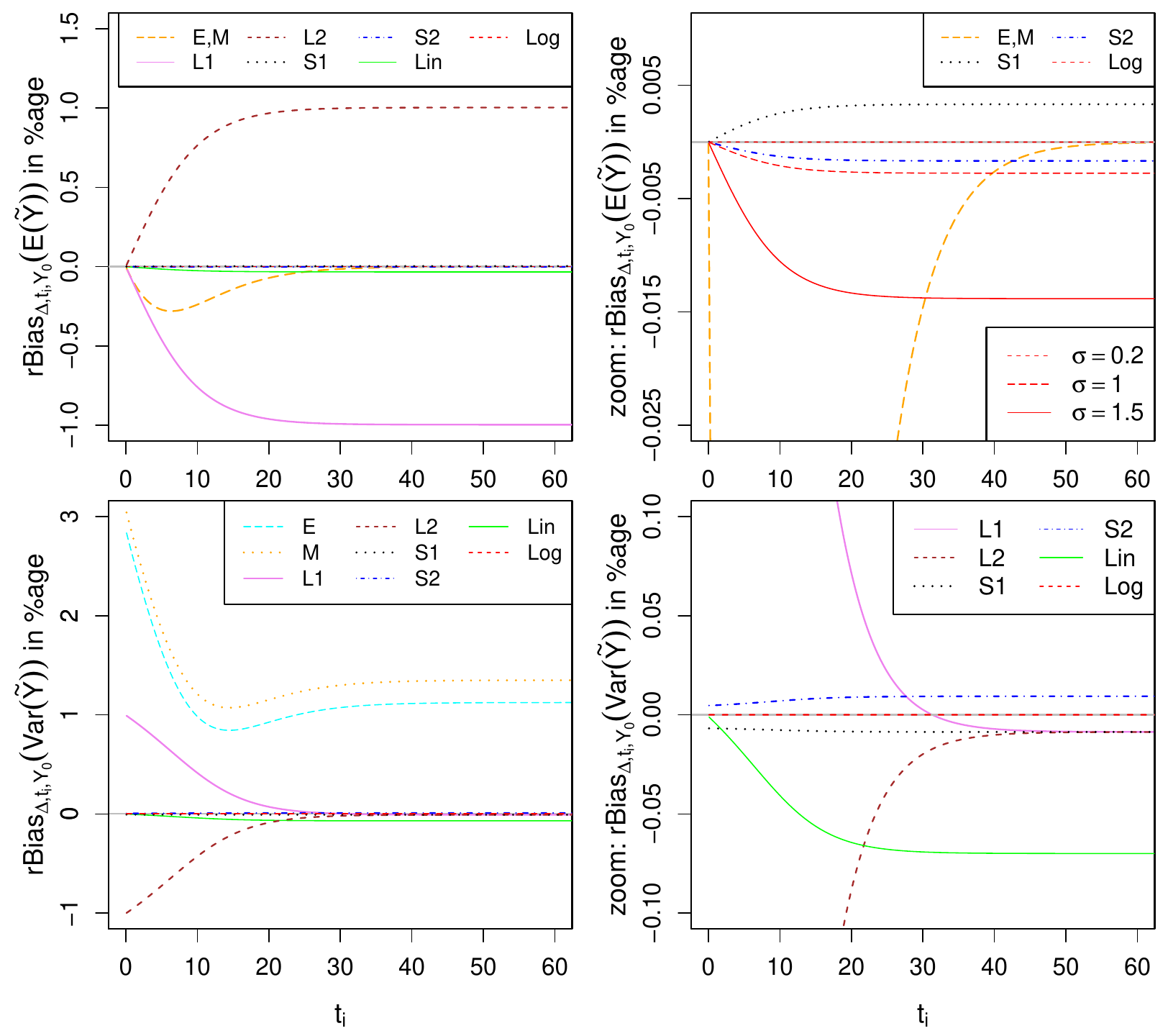}\\
	\caption{Relative conditional mean bias \eqref{cond_Bias_mean} (top left panel for $\textrm{E, M, L1, L2, S1, S2, Lin, Log}$ and a zoom in the top right panel for $\textrm{E, M, S1, S2, Log}$) and conditional variance bias \eqref{cond_Bias_variance} (bottom left panel for $\textrm{E, M, L1, L2, S1, S2, Lin, Log}$ and a zoom in the bottom right panel for $\textrm{L1, L2, S1, S2, Lin, Log}$) in percentage as a function of the time $t_i$, for $Y_0=10$, $\Delta=0.1$, $\mu=1$, $\tau=5$ and $\sigma=0.2$. In the top right panel also $\sigma=1$ and $\sigma=1.5$ are considered.
	}
	\label{Bias_cond_t_y0}
\end{figure}

In the top left panel of Figure \ref{Bias_cond_t_y0}, we report the relative mean bias \eqref{cond_Bias_mean}
in percentage as a function of $t_i$, for $Y_0=10$, $\Delta=0.1$, $\mu=1$,  $\tau=5$ and $\sigma=0.2$. The relative mean biases (in absolute value) introduced by the Strang splitting schemes are significantly smaller than those of the Lie-Trotter splitting schemes and close to $0$ for all $t_i$ under consideration, with the second Strang scheme performing slightly better than the first one (see the top right panel where we provide a zoom). Moreover, the piecewise linear method performs better than the Lie-Trotter methods, but worse than the Strang schemes. For the chosen value of $\sigma$, the log-ODE method outperforms the Strang methods and produces a bias even closer to $0$ for all times $t_i$. However, this fact changes when $\sigma$ is increased, as shown in the top right panel where we also consider $\sigma=1$ and $\sigma=1.5$. In particular, due to the dependence of the mean of the ODE schemes on $\sigma$, they may perform worse than all other methods in terms of preserving the mean when $\sigma$ increases. Furthermore, it can be observed that in the non-stationary initial part, the Strang and ODE methods clearly outperform the Euler-Maruyama and Milstein schemes. This changes with increasing time. In particular, the  relative mean bias of the Euler-Maruyama and Milstein schemes approaches $0$, suggesting an asymptotically unbiased mean (see Subsection \ref{subsec:4:1}).

\begin{figure}
	\centering
	\includegraphics[width=0.9\textwidth]{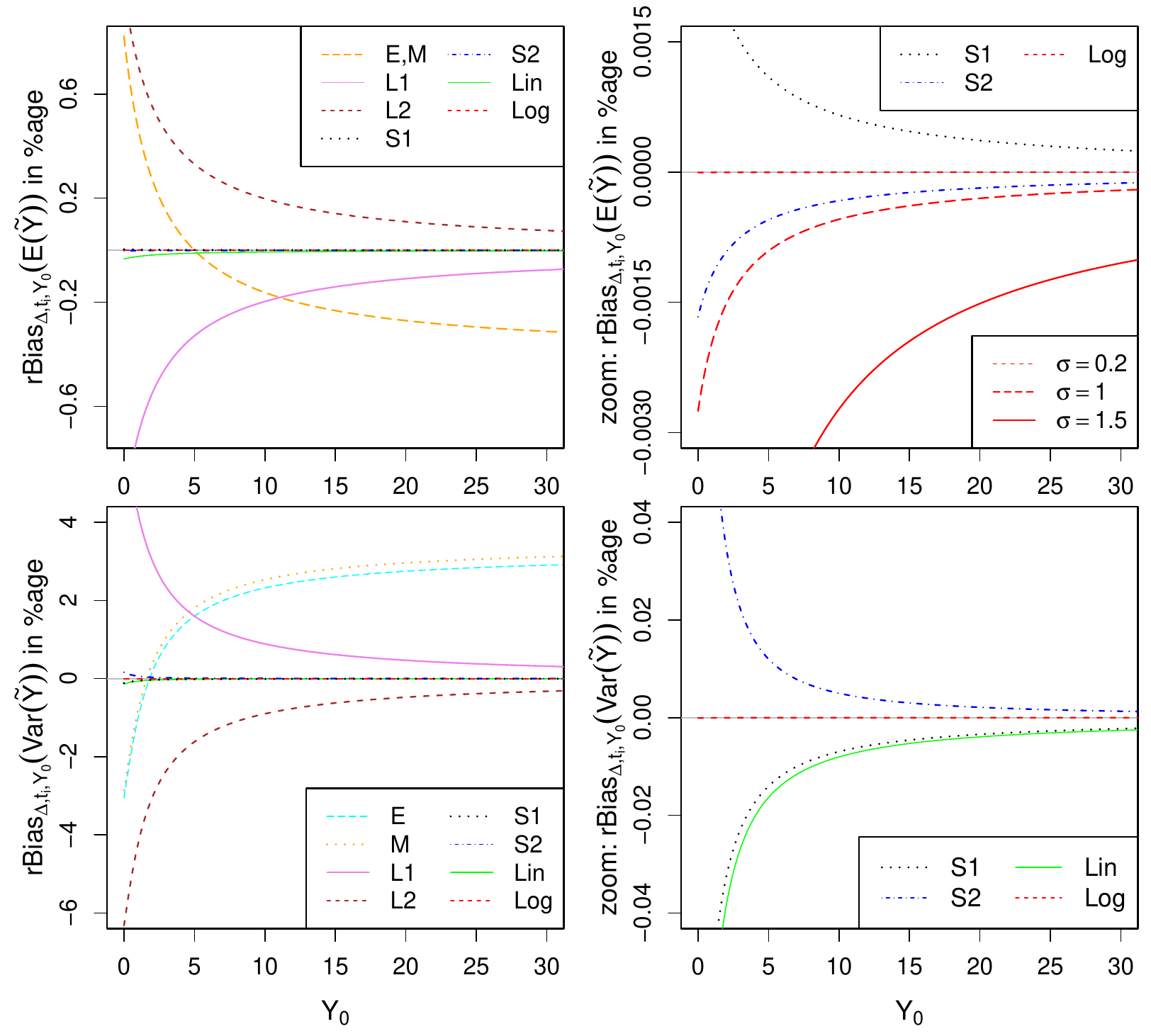}\\
	\caption{Relative conditional mean bias \eqref{cond_Bias_mean} (top left panel for $\textrm{E, M, L1, L2, S1, S2, Lin, Log}$ and a zoom in the top right panel for $\textrm{S1, S2, Log}$) and conditional variance bias \eqref{cond_Bias_variance} (bottom left panel for $\textrm{E, M, L1, L2, S1, S2, Lin, Log}$ and a zoom in the bottom right panel for $\textrm{S1, S2, Lin, Log}$) in percentage as a function of the initial value $Y_0$, for $t_i=2$, $\Delta=0.1$, $\mu=1$, $\tau=5$ and $\sigma=0.2$. In the top right panel also $\sigma=1$ and $\sigma=1.5$ are considered.}
	\label{Bias_cond_delta}
\end{figure}

In the bottom left panel of Figure \ref{Bias_cond_t_y0}, we report the conditional variance biases \eqref{cond_Bias_variance}
in percentage as a function of $t_i$ for the same values of $Y_0$, $\Delta$, $\mu$, $\tau$ and $\sigma=0.2$. All four splitting schemes and both ODE schemes yield better approximations of the conditional variance than the Euler-Maruyama and Milstein schemes for all $t_i$ under consideration. The log-ODE method yields again a bias  close to $0$ from the beginning, outperforming all other methods. This is also the case when $\sigma$ is increased (figures not shown). Except for $t_i$ very small, the Strang schemes outperform the piecewise linear method, and also yield biases close to $0$ from the beginning. Moreover, the relative variance biases (in absolute value) of the Lie-Trotter splitting schemes decrease in time and seem to coincide asymptotically with that of the first Strang scheme (see Subsection \ref{subsec:4:1}), as it can be observed in the bottom right panel of Figure \ref{Bias_cond_t_y0}. Similar results are obtained for other parameter values, time steps and initial conditions.

In Figure \ref{Bias_cond_delta}, we report the relative biases of the conditional mean \eqref{cond_Bias_mean} (top panels) and variance \eqref{cond_Bias_variance} (bottom panels) in percentage as a function of the initial value $Y_0$, for $t_i=2$ and the same parameters as before. All methods introduce larger biases for very small values of $Y_0$. This may be explained by the fact that reproducing the features of the process near a boundary, i.e., near~$0$, is more difficult. For $\sigma=0.2$, the log-ODE method outperforms the other methods, yielding relative biases close to $0$ for any considered choice of the initial condition, not being strongly influenced by it. Similar to before, this changes when $\sigma$ is increased, as illustrated in the top right panel where we also consider $\sigma=1$ and $\sigma=1.5$. The Strang methods (whose mean bias does not depend on~$\sigma$) do then introduce the smallest bias in the conditional mean. In general, the performance of the splitting and ODE schemes improves as $Y_0$ increases, while the Euler-Maruyama and Milstein schemes perform worse for large values of $Y_0$. This is in agreement with the fact that $Y_0$ enters into the conditional means of the splitting and ODE schemes in the same way as in the true quantity, as evident when comparing the expressions reported in Remark~\ref{rem_2} with the true conditional mean \eqref{cond_mean}. In particular, the conditional mean biases (not the relative ones) of the splitting and ODE schemes do not depend on $Y_0$, while those of the Euler-Maruyama and Milstein schemes do. Furthermore, the conditional variance biases introduced by the splitting and ODE schemes depend linearly on $Y_0$, while those of the Euler-Maruyama and Milstein schemes depend quadratically on $Y_0$. If $Y_0$ is close to the asymptotic mean $\mu\tau$, here $5$, the relative mean bias of the Euler-Maruyama and Milstein schemes is almost $0$ (top left panel), in agreement with the fact that they have an asymptotically unbiased mean (see Subsection~\ref{subsec:4:1}). 

\subsection{Investigation of the asymptotic moments}
\label{subsec:4:1}

We now investigate the asymptotic mean and variance of the numerical solutions, i.e.,
\begin{equation*}
\mathbb{E}[\widetilde{Y}_\infty]:=\lim\limits_{i \to \infty} \mathbb{E}[\widetilde{Y}(t_i)|Y_0], \quad \textrm{Var}(\widetilde{Y}_\infty):=\lim\limits_{i \to \infty} \textrm{Var}(\widetilde{Y}(t_i)|Y_0), \quad t_i=i\Delta,
\end{equation*}
comparing them with the true quantities \eqref{statE} and \eqref{statV}, respectively.

\subsubsection{Closed-form expressions for the asymptotic means and variances}
\label{subsec:4:1:1}

In Proposition \ref{Prop_asym_1}, we provide closed-form expressions of the asymptotic mean and variance of the random variable $Z_i$ introduced in Proposition \ref{Prop_cond_1}. As before, these relations allow for a straightforward derivation of the corresponding results for the numerical schemes of interest, including necessary conditions that guarantee the existence of the asymptotic quantities.
\begin{proposition}\label{Prop_asym_1}
	Let the random variable $Z_i$ be defined as in Proposition \ref{Prop_cond_1}. If $|\mu_x|<1$, the asymptotic mean of $Z_i$ is given by
	\begin{equation}\label{AEZi}
	\mathbb{E}[Z_\infty]:=\lim\limits_{i \to \infty} \mathbb{E}[Z_i|Z_0]=c_1\mu_h\frac{\mu_x}{1-\mu_x}+c_1 W_0 \mu_h+c_2.
	\end{equation}
	If, in addition, $r \in (0,1)$, the asymptotic variance of $Z_i$ is given by
	\begin{equation}
	\label{AVZi}\textrm{Var}(Z_\infty):=\lim\limits_{i \to \infty} \textrm{Var}(Z_i|Z_0)=c_1^2\left(\frac{r_h(\mu_x-1)^2+2\mu_h\mu_x p(1-\mu_x)-(1-r)\mu_h^2}{(\mu_x-1)^2(1-r)}\right).
	\end{equation}
\end{proposition}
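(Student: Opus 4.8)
The plan is to obtain both asymptotic quantities by letting $i \to \infty$ directly in the closed-form conditional expressions \eqref{EZi} and \eqref{VZi} supplied by \Cref{Prop_cond_1}. Since $I \in \{i-1,i\}$, both admissible choices diverge with $i$ and produce the same limit, so I treat them together. The two hypotheses do all the work: $|\mu_x|<1$ forces $\mu_x^i \to 0$ and renders the geometric series $\sum_k \mu_x^k$ and $\sum_k \mu_x^{2k}$ summable, while $r \in (0,1)$ (needed only for the variance) forces $r^i \to 0$ and makes $\sum_k r^k$ summable. I shall also use the identity $\mu_x = \mu_m\mu_h$, which holds because $X_j = M_jH_j$ with $M_j$ and $H_j$ independent.

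For the asymptotic mean I pass to the limit in \eqref{EZi}. The leading term $Z_0\mu_x^i$ vanishes, the geometric sum satisfies $\sum_{k=1}^{I}\mu_x^k \to \mu_x/(1-\mu_x)$, and the summands $c_1W_0\mu_h$ and $c_2$ are independent of $i$; collecting these reproduces \eqref{AEZi}. This part is routine.

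For the asymptotic variance I take the limit of the three groups of terms in \eqref{VZi} one at a time. The first group $Z_0^2(r^i-\mu_x^{2i})$ vanishes since both powers tend to $0$. The mixed group $2c_1Z_0\sum_{k=0}^I(r^kr_h-\mu_x^{2k}\mu_h^2)\mu_m\mu_x^{i-k-1}$ I split into two sums; evaluating each as a finite geometric sum---using $\sum_{k=0}^{i-1}r^k\mu_x^{i-1-k}=(r^i-\mu_x^i)/(r-\mu_x)$ for $r\neq\mu_x$ (with the confluent case $r=\mu_x$, where the sum equals $i\mu_x^{i-1}\to 0$, handled separately) and $\sum_{k=0}^{I}\mu_x^{k+i-1}=\mu_x^{i-1}(1-\mu_x^{I+1})/(1-\mu_x)$---exhibits in each a prefactor tending to $0$, so the whole group vanishes. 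The surviving contribution is the last group $c_1^2[\sum_{k=0}^I a_k+2\mu_m\mu_h\sum_{l=1}^I\sum_{k=0}^{l-1}a_k\mu_x^{l-k-1}]$ with $a_k:=r^kr_h-\mu_x^{2k}\mu_h^2$. The single sum converges to $S:=r_h/(1-r)-\mu_h^2/(1-\mu_x^2)$. For the double sum I reindex by $m:=l-k-1$, which factors it as $\sum_{k=0}^{I-1}a_k\sum_{m=0}^{I-k-1}\mu_x^m=\tfrac{1}{1-\mu_x}[\sum_{k=0}^{I-1}a_k-\sum_{k=0}^{I-1}a_k\mu_x^{I-k}]$; the first inner sum tends to $S$ and the remainder $\sum_{k=0}^{I-1}a_k\mu_x^{I-k}$ vanishes by the same geometric estimates as above, so the double sum converges to $S/(1-\mu_x)$. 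Hence the variance tends to $c_1^2 S\,[1+2\mu_m\mu_h/(1-\mu_x)]$.

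The final step is purely algebraic. Using $\mu_m\mu_h=\mu_x$ I rewrite the bracket as $1+2\mu_m\mu_h/(1-\mu_x)=(1-\mu_x+2\mu_m\mu_h)/(1-\mu_x)=(1+\mu_h\mu_m)/(1-\mu_x)$, and combining this with $S$ over the common denominator $(1-r)(1-\mu_x^2)$ together with $1-\mu_x^2=(1-\mu_x)(1+\mu_x)$ delivers exactly \eqref{AVZi}. I expect the main obstacle to be the two $i$-dependent sums---the mixed group and the double sum---where the exponent of $\mu_x$ couples the summation index to $i$, so a naive termwise limit is not immediate; I prefer to evaluate the finite geometric sums explicitly before letting $i\to\infty$, which sidesteps any interchange of limit and summation (the alternative being dominated convergence with an $i$-uniform geometric bound). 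The concluding identification with the compact form \eqref{AVZi} hinges on the relation $\mu_x=\mu_m\mu_h$.
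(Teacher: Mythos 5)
Your proposal is correct and follows essentially the same route as the paper's proof in Appendix C: pass to the limit term by term in the conditional expressions \eqref{EZi} and \eqref{VZi}, using $|\mu_x|<1$ and $r\in(0,1)$ to kill the $Z_0$-dependent and mixed covariance terms and to sum the surviving geometric series, then simplify via $\mu_x=\mu_m\mu_h$. You simply supply more detail than the paper does (explicit evaluation of the $i$-coupled geometric sums, the confluent case $r=\mu_x$, and the reindexing of the double sum), and all of it checks out against the stated limits.
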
\noindent
The proof of Proposition \ref{Prop_asym_1} is given in Appendix \ref{appC}. 

Based on Proposition \ref{Prop_asym_1}, we derive the asymptotic moments of the considered numerical schemes. 
\begin{corollary}\label{Lemma1}
	Let $\widetilde{Y}(t_i)$ be the numerical solutions defined through \eqref{EM}, \eqref{M}, \eqref{SP1}-\eqref{SP4}, \eqref{Lin} and \eqref{Log}, respectively. The asymptotic means and variances of the Euler-Maruyama and Milstein schemes are given by
	\begin{eqnarray}
	\label{lim_EM_1} &&\text{If} \ \left|1-\frac{\Delta}{\tau}\right|<1, \quad \mathbb{E}[\widetilde{Y}^{\textrm{E}}_\infty]=\mathbb{E}[\widetilde{Y}^{\textrm{M}}_\infty]=\mu\tau, \\
	\label{limVarEM}  \hspace{-1cm} &&\text{If} \ \left|1-\frac{\Delta}{\tau}\right|<1 \ \text{and} \ \Delta<2\tau-\sigma^2\tau^2, \ \textrm{Var}(\widetilde Y_\infty^\textrm{E})=\frac{(\mu\tau)^2}{\frac{2}{\sigma^2\tau}-1-\frac{\Delta}{\sigma^2\tau^2}}, \\
	\label{limVarM} \hspace{-1cm} &&\text{If} \ \left|1-\frac{\Delta}{\tau}\right|<1 \ \text{and} \ \Delta<\frac{2\tau-\sigma^2\tau^2}{\frac{\sigma^4\tau^2}{2}+1}, \ \textrm{Var}(\widetilde Y_\infty^\textrm{M})=\frac{(\mu\tau)^2(1+\frac{\sigma^2\Delta}{2})}{\frac{2}{\sigma^2\tau}-1-\frac{\Delta}{\sigma^2\tau^2}-\frac{\sigma^2\Delta}{2}}.
	\end{eqnarray}
	The asymptotic means and variances of the splitting schemes are given by
	\begin{eqnarray}
	\label{lim_E_1}  && \mathbb{E}[\widetilde{Y}^\textrm{L1}_\infty]={\mu}\tau \left(\frac{\Delta/\tau}{e^{\Delta/\tau}-1} \right),\\
	\label{equiv} &&
	\mathbb{E}[\widetilde Y_\infty^\textrm{L2}]=\mathbb{E}[\widetilde Y_\infty^\textrm{L1}]+\mu\Delta=\mathbb{E}[\widetilde Y_\infty^\textrm{L1}] e^{\Delta/\tau},\\
	\label{equiv2} &&
	\mathbb{E}[\widetilde Y_\infty^\textrm{S1}]=\mathbb{E}[\widetilde Y_\infty^\textrm{L1}]+\frac{\mu\Delta}{2}=\frac{1}{2}\mathbb{E}[\widetilde Y_\infty^\textrm{L1}] (1+e^{\Delta/\tau}),\\
	\label{lim_E_4} && \mathbb{E}[\widetilde{Y}^\textrm{S2}_\infty]=\mathbb{E}[\widetilde Y_\infty^\textrm{L1}]e^{\Delta/2\tau}, \\
	\label{limVar}  \hspace{-1cm} &&\text{If} \ \sigma^2\tau<2, \  \textrm{Var}(\widetilde Y_\infty^\textrm{L1})=\textrm{Var}(\widetilde Y_\infty^\textrm{L2})=\textrm{Var}(\widetilde Y_\infty^\textrm{S1})
	=\mathbb{E}[\widetilde Y_\infty^\textrm{L1}]^2 \frac{e^{2\Delta/\tau}(e^{\Delta\sigma^2}-1)}{e^{2\Delta/\tau}-e^{\Delta\sigma^2}}, \\
	\label{limVar4} \hspace{-1cm}&&\text{If}  \ \sigma^2\tau<2, \ \textrm{Var}(\widetilde Y_\infty^\textrm{S2})
	=\mathbb{E}[\widetilde Y_\infty^\textrm{L1}]^2 \frac{e^{\Delta/\tau}(e^{\Delta\sigma^2/2}-1)(e^{2\Delta/\tau}+e^{\Delta\sigma^2/2})}{e^{2\Delta/\tau}-e^{\Delta\sigma^2}}.
	\end{eqnarray}
	The asymptotic means and variances of the ODE schemes are given by
	\begin{eqnarray}
	\label{lim_E_Lin} \hspace{-2cm} && \mathbb{E}[\widetilde{Y}^\textrm{Lin}_\infty]=\mathbb{E}[\widetilde Y_\infty^\textrm{L1}]e^{\Delta/\tau}L_{\Delta,\tau,\sigma}, \\
	\label{lim_E_Log} \hspace{-2cm} && \mathbb{E}[\widetilde{Y}^\textrm{Log}_\infty]=\mathbb{E}[\widetilde Y_\infty^\textrm{L1}]e^{\Delta/\tau}L_{\Delta,\tau,\sigma}\left( 1+\sigma^2 \frac{\Delta}{12} \right), \\
	\label{limVarLin} \hspace{-2cm}&&\text{If} \  \sigma^2\tau<2, \ \textrm{Var}(\widetilde Y_\infty^\textrm{Lin})
	=\mathbb{E}[\widetilde Y_\infty^\textrm{L1}]^2 \frac{e^{2\Delta/\tau}\left( 2L\widetilde{L}(e^{\Delta/\tau}-1)+\bar{L}(e^{\Delta/\tau}-1)^2+L^2(e^{\Delta\sigma^2}-e^{2\Delta/\tau}) \right)}{e^{2\Delta/\tau}-e^{\Delta\sigma^2}}, \\
	\label{limVarLog} \hspace{-2cm}&&\text{If} \  \sigma^2\tau<2, \ \textrm{Var}(\widetilde Y_\infty^\textrm{Log})
	=\mathbb{E}[\widetilde Y_\infty^\textrm{L1}]^2 \frac{e^{2\Delta/\tau}\left( 2K\widetilde{K}(e^{\Delta/\tau}-1)+\bar{K}(e^{\Delta/\tau}-1)^2+K^2(e^{\Delta\sigma^2}-e^{2\Delta/\tau}) \right)}{e^{2\Delta/\tau}-e^{\Delta\sigma^2}},
	\end{eqnarray}
	where $L\equiv L_{\Delta,\tau,\sigma}$, $\bar{L}\equiv \bar{L}_{\Delta,\tau,\sigma}$, $\widetilde{L}\equiv \widetilde{L}_{\Delta,\tau,\sigma}$, $K\equiv K_{\Delta,\tau,\sigma}$, $\bar{K}\equiv \bar{K}_{\Delta,\tau,\sigma}$, $\widetilde{K}\equiv \widetilde{K}_{\Delta,\tau,\sigma}$ are as in \eqref{L_delta_tau_sigma} and \eqref{barL_delta_tau_sigma}-\eqref{tildeK_delta_tau_sigma}, respectively. 
\end{corollary}
\begin{proof}
	The results and their required conditions follow directly from Proposition \ref{Prop_asym_1}, using the corresponding values reported in Table \ref{table1} and simplifying the resulting expressions.
\end{proof}

Remarkably, the splitting and ODE methods do not require extra conditions for the existence of the asymptotic mean, but, between the two, only the splitting schemes have asymptotic means independent on $\sigma$, as it is the case for the IGBM. Moreover, the condition guaranteeing the existence of the asymptotic variance of the splitting and ODE schemes is the same as that of the true process, i.e., $\sigma^2\tau<2$. In contrast, the Euler-Maruyama and the Milstein schemes rely on extra conditions that do not depend on the features of the model. If $|1-\Delta/\tau|<1$, the Euler-Maruyama and the Milstein schemes have unbiased asymptotic means. Regarding the asymptotic variance, the condition for the Milstein scheme in \eqref{limVarM} is more restrictive than that for the Euler-Maruyama method in \eqref{limVarEM}, agreeing with similar results in the literature \cite{Buckwar2011}. The asymptotic variances of the Lie-Trotter schemes and the first Strang scheme coincide, as previously hypothesised looking at Figure \ref{Bias_cond_t_y0}.

\vspace{-0.3cm}
If $\mu=0$, the results for the Euler-Maruyama and Milstein methods in Corollary \ref{Lemma1} are in agreement with those available in the linear stochastic stability literature for the GBM \cite{Higham2000,Saito1996}. In particular, the conditions required in \eqref{limVarEM} and \eqref{limVarM} are the same as those guaranteeing their mean-square stability. On the contrary, Corollary \ref{Lemma1} implies that the splitting \eqref{SP1}-\eqref{SP4} and  ODE \eqref{Lin}, \eqref{Log} schemes are asymptotically first and second moment stable without needing extra~conditions.

\begin{figure}
	\centering
	\includegraphics[width=0.9\textwidth]{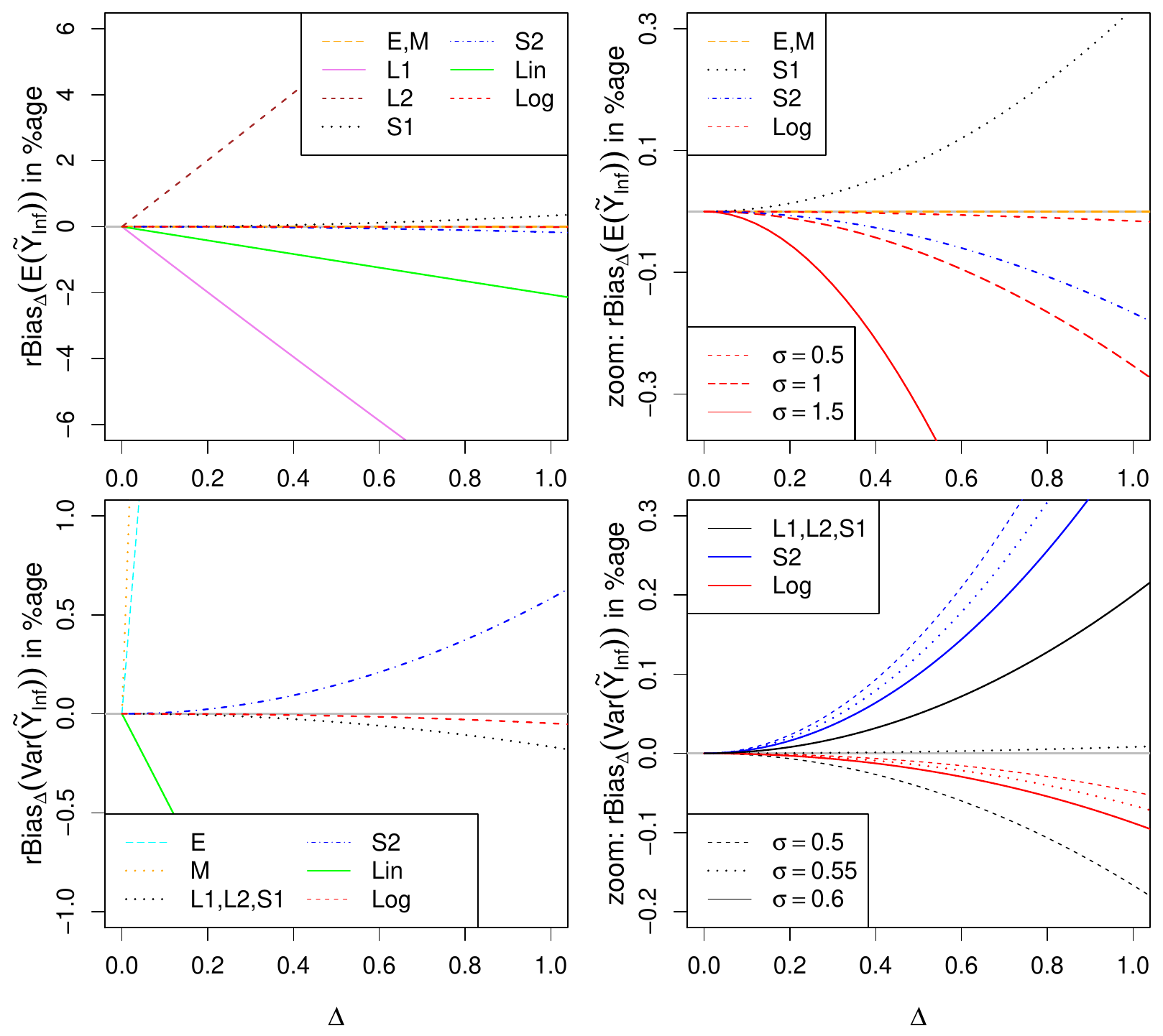}\\
	\caption{Relative asymptotic mean bias \eqref{asym_bias_mean} (top left panel for $\textrm{E, M, L1, L2, S1, S2, Lin, Log}$ and a zoom in the top right panel for $\textrm{E, M, S1, S2, Log}$) and asymptotic variance bias \eqref{asym_bias_variance} (bottom left panel for $\textrm{E, M, L1, L2, S1, S2, Lin, Log}$ and a zoom in the bottom right panel for $\textrm{L1, L2, S1, S2, Log}$) in percentage as a function of $\Delta$, for $\tau=5$ and $\sigma=0.5$. In the top right and bottom right panels also $\sigma=1$, $\sigma=1.5$ and $\sigma=0.55$, $\sigma=0.6$ are considered, respectively.}
	\label{Bias1_2_3}\vspace{-0.5cm}
\end{figure}

\subsubsection{Asymptotic mean and variance biases}
\label{subsec:4:1:1:new}

Corollary \ref{Lemma1} implies that the derived schemes introduce asymptotic mean and variance biases. In the following, we analyse the resulting asymptotic relative biases 
\begin{eqnarray}
\label{asym_bias_mean}\text{rBias}_\Delta\left(\mathbb{E}[\widetilde{Y}_\infty]\right)&:=&\frac{\mathbb{E}[\widetilde{Y}_\infty]-\mathbb{E}[Y_\infty]}{\mathbb{E}[Y_\infty]}, \\
\label{asym_bias_variance}\text{rBias}_\Delta\left(\textrm{Var}(\widetilde{Y}_\infty)\right)&:=&\frac{\textrm{Var}(\widetilde{Y}_\infty)-\textrm{Var}(Y_\infty)}{\textrm{Var}(Y_\infty)}, 
\end{eqnarray}
with respect to the true quantities \eqref{statE} and \eqref{statV}, for each considered numerical method. These biases depend on the time step $\Delta$ and on the model parameters. All relative asymptotic biases do not depend on $\mu$. In particular, except for the ODE methods, the asymptotic mean biases depend only on the ratio $\Delta/\tau$ and their asymptotic variance biases depend on both $\Delta/\tau$ and $\Delta\sigma^2$. 
As expected, all biases vanish as $\Delta\to0$, provided that the conditions of Corollary \ref{Lemma1} are satisfied.

\begin{figure}
	\centering
	\includegraphics[width=0.9\textwidth]{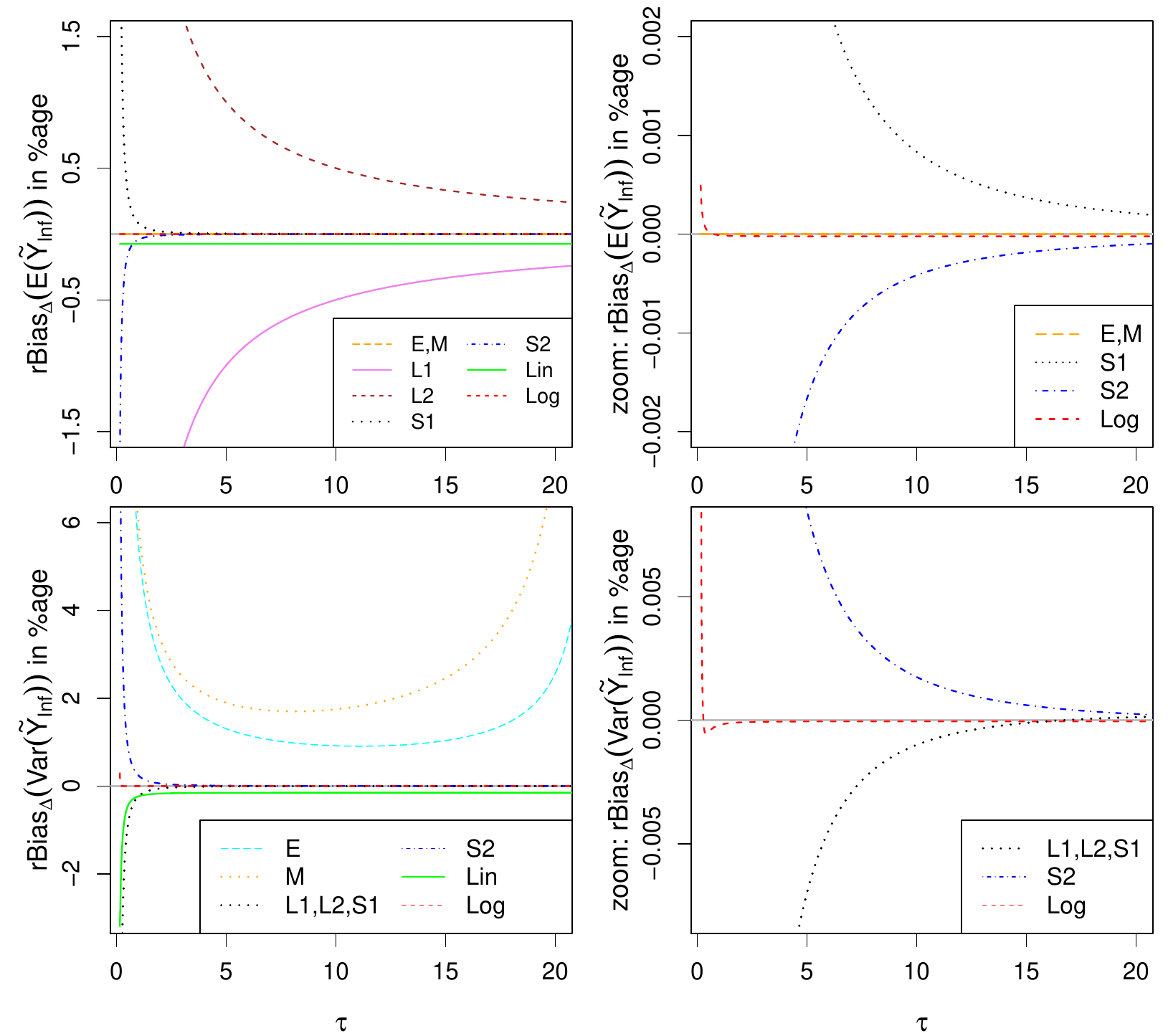}\\
	\caption{Relative asymptotic mean bias \eqref{asym_bias_mean} (top left panel for $\textrm{E, M, L1, L2, S1, S2, Lin, Log}$ and a zoom in the top right panel for $\textrm{E, M, S1, S2, Log}$) and asymptotic variance bias \eqref{asym_bias_variance} (bottom left panel for $\textrm{E, M, L1, L2, S1, S2, Lin, Log}$ and a zoom in the bottom right panel for $\textrm{L1, L2, S1, S2, Log}$) in percentage as a function of $\tau$, for $\Delta=0.1$ and $\sigma=0.3$.}
	\label{Bias_tau}
\end{figure}

In the top left panel of Figure \ref{Bias1_2_3}, we report the relative biases of the asymptotic mean \eqref{asym_bias_mean} in percentage as a function of the time step $\Delta$, for $\tau=5$ and $\sigma=0.5$. Only the asymptotic mean of the Euler-Maruyama and Milstein methods is unbiased. Moreover, independent of the choice of the model parameters and for any time step $\Delta>0$, the Strang schemes yield significantly smaller asymptotic mean biases (in absolute value) than the Lie-Trotter schemes, in agreement with the results reported in the previous section. Moreover, the mean bias (in absolute value) of the second Strang scheme is slightly smaller than that of the first Strang scheme as highlighted in the top right panel, where we provide a zoom. In addition, the log-ODE method introduces a smaller bias in the asymptotic mean than the piecewise linear method. This does not change when considering other values for $\tau$ and $\sigma$, see the top panels of Figure \ref{Bias_tau} and Figure \ref{Bias_asym_sigma} where we fix $\Delta=0.1$ and consider \eqref{asym_bias_mean} as a function of $\tau$ and $\sigma$, respectively. Furthermore, for small values of $\sigma$, the log-ODE method performs better than the Strang schemes in terms of preserving the asymptotic mean. However, this changes when $\sigma$ is increased, see the top right panel of Figure \ref{Bias1_2_3} and the top panels of Figure \ref{Bias_asym_sigma}.

\begin{figure}[H]
	\centering
	\includegraphics[width=0.9\textwidth]{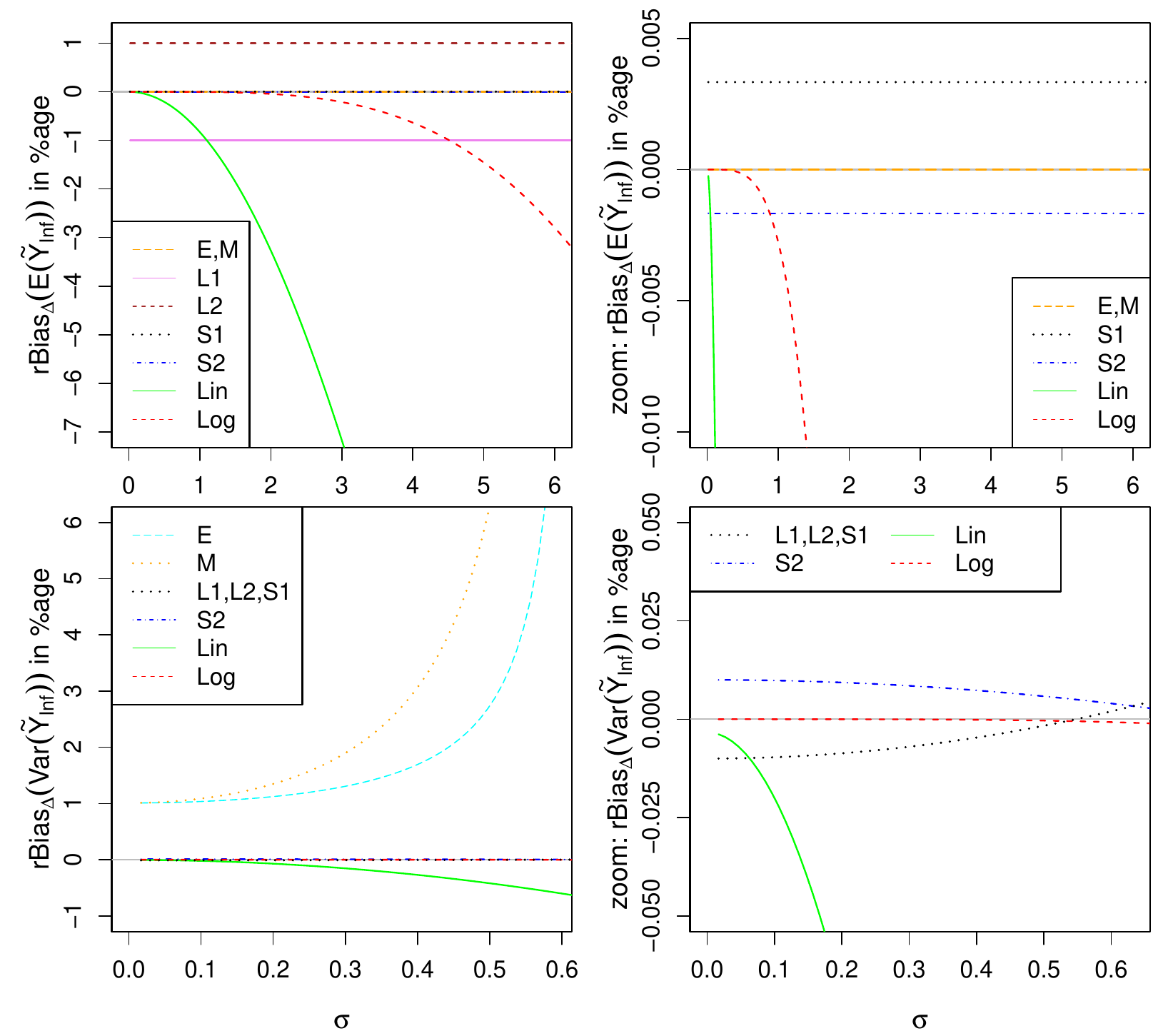}\\
	\caption{Relative asymptotic mean bias \eqref{asym_bias_mean} (top left panel for $\textrm{E, M, L1, L2, S1, S2, Lin, Log}$ and a zoom in the top right panel for $\textrm{E, M, S1, S2, Lin, Log}$) and asymptotic variance bias \eqref{asym_bias_variance} (bottom left panel for $\textrm{E, M, L1, L2, S1, S2, Lin, Log}$ and a zoom in the bottom right panel for $\textrm{L1, L2, S1, S2, Lin, Log}$) in percentage as a function of $\sigma$, for $\tau=5$ and $\Delta=0.1$.}
	\label{Bias_asym_sigma}\vspace{-0.2cm}
\end{figure}

In the bottom left panel of Figure \ref{Bias1_2_3}, we report the relative biases of the asymptotic variance \eqref{asym_bias_variance} in percentage as a function of the time step $\Delta$, for $\tau=5$ and $\sigma=0.5$ fulfilling the conditions of Corollary \ref{Lemma1}. Note that, the Milstein scheme introduces a larger bias in the variance than the Euler-Maruyama method. Moreover, all splitting schemes yield significantly smaller asymptotic variance biases (in absolute value) than the Euler-Maruyama, Milstein and piecewise linear methods. The log-ODE method, however, outperforms the splitting methods. This fact holds true also for other values of $\tau$ and $\sigma$, see the bottom panels of Figure~\ref{Bias_tau} and Figure \ref{Bias_asym_sigma}, where we fix $\Delta=0.1$ and plot \eqref{asym_bias_variance} as a function of $\tau$ and $\sigma$, respectively. However, there exist combinations of $\tau$ and $\sigma$ for which the condition $\sigma^2\tau<2$ is satisfied and the first Strang and Lie-Trotter methods outperform the log-ODE method in terms of asymptotic variance preservation. This is illustrated in Figure \ref{Bias_heat}, where we provide a heatmap of
\vspace{-0.3cm}\begin{equation}\label{r}
\text{ratioBias}:=\frac{\left|\text{rBias}_\Delta\left(\textrm{Var}(\widetilde{Y}^{\textrm{L1,L2,S1}}_\infty)\right)\right|}{\left|\text{rBias}_\Delta\left(\textrm{Var}(\widetilde{Y}^{\textrm{Log}}_\infty)\right)\right|},
\end{equation}
for different values of $\tau$ and $\sigma$. In particular, the region within the white lines corresponds to combinations of $\tau$ and $\sigma$ for which this ratio is smaller than $1$, i.e., for which the first Strang and Lie-Trotter methods introduce a smaller relative asymptotic variance bias (in absolute value) than the log-ODE method. This can be also observed in the bottom right panel of Figure \ref{Bias1_2_3}, where we compare the log-ODE and splitting methods for $\sigma=0.5,0.55,0.6$, with $\sigma=0.55$ within the region marked by the white lines of Figure \ref{Bias_heat}.

\begin{figure}
	\centering
	\includegraphics[width=0.6\textwidth]{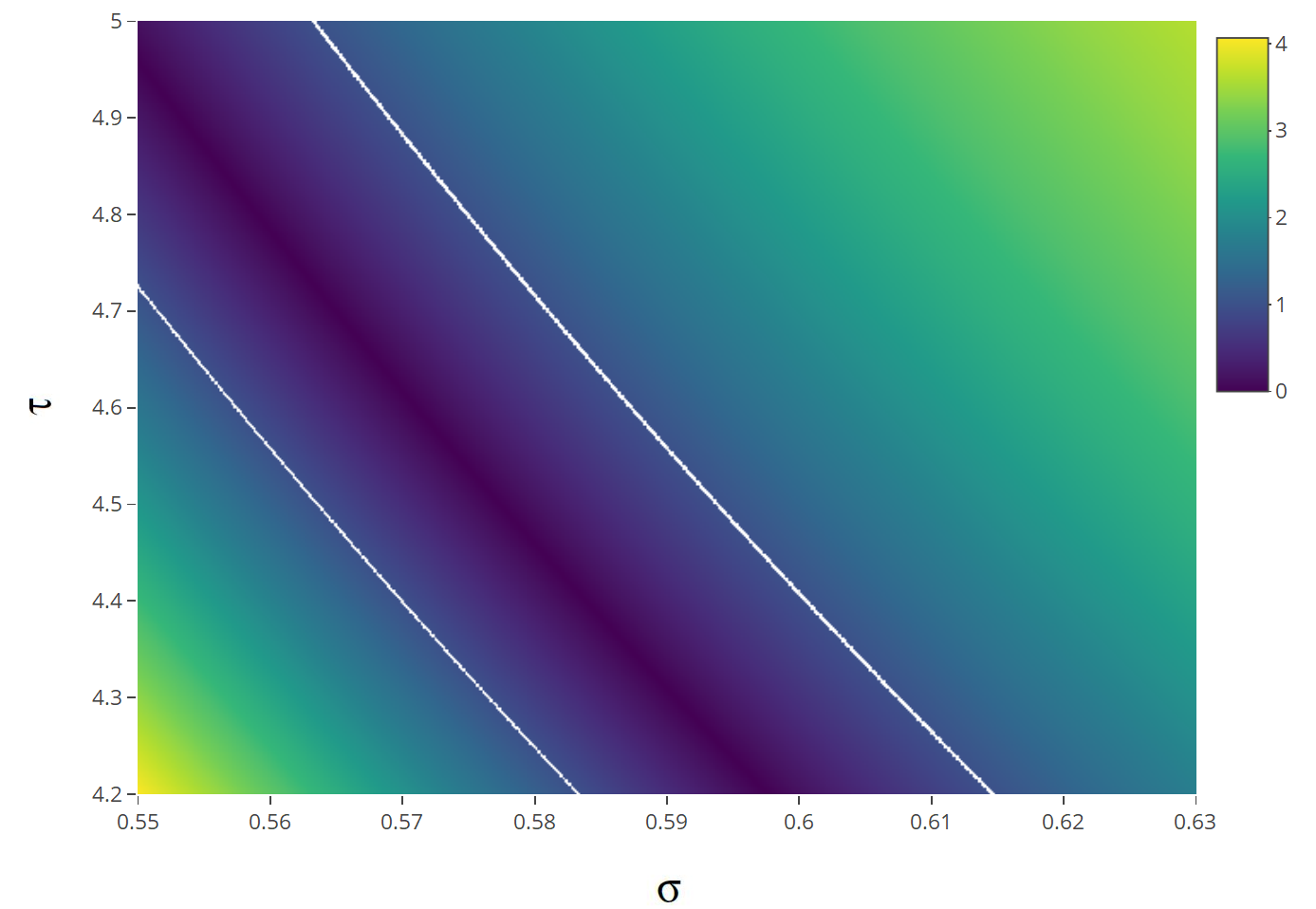}\\
	\caption{Ratio \eqref{r} of the relative asymptotic variance biases of the $\textrm{L1, L2, S1}$ and $\textrm{Log}$ schemes, for different values of $\tau$ and $\sigma$. For the region within the white lines this ratio is smaller than $1$.}
	\label{Bias_heat} \vspace{-0.3cm}
\end{figure}

Note also that all biases increase when $\tau$ is very small (cf. Figure \ref{Bias_tau}). This is because all biases depend on the ratio $\Delta/\tau$, requiring a small value of the time step $\Delta$ to keep this ratio constant when $\tau$ is small. Moreover, for small values of $\tau$ the process gets closer to the boundary, where its behaviour is more difficult to preserve. The ODE methods, however, are less deterred by small values of $\tau$, possibly due to their dependence on $L_{\Delta,\tau,\sigma}$ \eqref{L_delta_tau_sigma}, $\bar{L}_{\Delta,\tau,\sigma}$ \eqref{barL_delta_tau_sigma} and $\widetilde{L}_{\Delta,\tau,\sigma}$ \eqref{tildeL_delta_tau_sigma}. Moreover, the relative biases (in absolute value) of the splitting and ODE methods decrease for large values of $\tau$, while the relative variance biases (in absolute value) of the Euler-Maruyama and Milstein schemes initially decrease and then increase. 

Interestingly, while the relative asymptotic variance biases (in absolute value) of the Euler-Maruyama, Milstein and ODE schemes increase in $\sigma$ (bottom panels of Figure \ref{Bias_asym_sigma}), that of the second Strang method decreases as $\sigma$ increases (bottom right panel), and that of the first Strang and Lie-Trotter schemes first decreases, and then increases again. The latter scenario is related to the fact that there do exist parameter values for which the first Strang and Lie-Trotter methods introduce a smaller relative variance bias (in absolute value) than the log-ODE method, see Figure~\ref{Bias_heat}.

In general, if both $\tau$ and $\sigma$ are large, such that the stationary condition $\sigma^2\tau<2$ is only met tightly, the splitting and log-ODE schemes perform well compared to the Euler-Maruyama, Milstein and piecewise linear method in terms of preserving the asymptotic variance. In particular, even though the Strang and log-ODE schemes perform slightly worse than the Euler-Maruyama and Milstein schemes in terms of the asymptotic mean, they clearly outperform them in terms of the asymptotic variance. For this reason, when, for example, analysing the asymptotic coefficient of variation, i.e., $\textrm{CV}(Y_\infty):=~\sqrt{\textrm{Var}(Y_\infty)}/\mathbb{E}[Y_\infty]$, which is a measure of dispersion that allows to simultaneously study the error impinging on both quantities, the Strang and log-ODE schemes are superior to all other schemes. Moreover, if the stationary condition $\sigma^2\tau<2$ is only met tightly, 
the first Strang scheme outperforms the second one in terms of the CV.

\vspace{-0.2cm}
\subsection{Preservation of the boundary properties}
\label{subsec:4:2}

As discussed in Section \ref{sec2}, the boundary $0$ of the IGBM may be of entrance, unattainable and attracting or exit type, depending on the parameter $\mu$. Corresponding  properties motivated by this classification have been introduced at the end of Section \ref{sec2}. A numerical scheme $\widetilde{Y}(t_i)$ is said to preserve these properties if the following discrete versions are fulfilled: \vspace{-0.1cm}
\begin{itemize}
	\item[$\bullet$] \textit{Discrete unattainable property}: If $\mu \geq 0$, then $\mathbb{P}(\widetilde{Y}(t_i)>0|\widetilde{Y}(t_{i-1})>0)=1$. \vspace{-0.15cm}
	\item[$\bullet$] \textit{Discrete absorbing property}: If $\mu=0$, then $\mathbb{P}(\widetilde{Y}(t_1)=0|Y_0=0)=1$. \vspace{-0.15cm}
	\item[$\bullet$] \textit{Discrete entrance property}: If $\mu>0$, then $\mathbb{P}(\widetilde{Y}(t_1)>0|Y_0=0)=1$. \vspace{-0.15cm}
	\item[$\bullet$] \textit{Discrete exit property}: If $\mu<0$, then $\mathbb{P}(\widetilde{Y}(t_i)<0|\widetilde{Y}(t_{i-1}) \leq 0)=1$.
\end{itemize}\vspace{-0.1cm}
It is well known that the Euler-Maruyama and Milstein schemes may fail in meeting such conditions. For example, the Euler-Maruyama scheme \eqref{EM} does not fulfill the discrete unattainable property for any choice of $\Delta$, since $\xi_{i-1}$ assumes all values in $\mathbb{R}$ with a positive probability \cite{Kahl2008}. Moreover, the Milstein scheme \eqref{M} may not fulfill this property either, if $\widetilde{Y}(t_{i-1})(\sigma^2 +\frac{2}{\tau})-2\mu>0$, unless the time discretisation step $\Delta$ satisfies
\begin{equation}\label{cond_Delta}
\Delta < \frac{2yG'(y)-G(y)}{(G(y)G'(y)-2F(y))G'(y)}=\frac{y}{\sigma^2 y + \frac{2}{\tau}y-2\mu}, \vspace{-0.2cm}
\end{equation}
where $y:=\widetilde{Y}(t_{i-1})$ and $G'(y)$ denotes the derivative of $G$ with respect to $y$ \cite{Kahl2008}. Thus, to guarantee positivity, the time step $\Delta$ would need to be updated in every iteration step. 

Note that, the discrete absorbing and entrance properties are the only properties which are satisfied by the Euler-Maruyama and Milstein schemes, for any time step $\Delta$. In contrast, the ODE methods and the derived splitting schemes preserve the different boundary properties for any choice of time step $\Delta>0$, as shown below. Moreover, their boundary behaviour depends only on the parameter $\mu$, as it is the case for the IGBM. 
\vspace{-0.1cm}\begin{proposition}\label{Theorem2}
	Let $\widetilde{Y}(t_i)$ be the splitting, piecewise linear and log-ODE schemes defined through \eqref{SP1}-\eqref{SP4}, \eqref{Lin} and \eqref{Log}, respectively. They fulfill the discrete unattainable, absorbing, entrance and exit properties for any choice of the time step $\Delta$.
\end{proposition}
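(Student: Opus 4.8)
The plan is to exploit the single structural feature shared by all four splitting schemes \eqref{SP1}--\eqref{SP4}: the embedded GBM factor
\[
E_{i-1} := e^{-(\frac{1}{\tau}+\frac{\sigma^2}{2})\Delta+\sigma \xi_{i-1}}
\]
is strictly positive almost surely, for \emph{every} realisation of the Gaussian increment $\xi_{i-1}$, and the same holds for the two half-step factors appearing in \eqref{SP4}. This is exactly the property that fails for the Euler-Maruyama and Milstein updates, whose multiplicative coefficient $1-\Delta/\tau+\sigma\xi_{i-1}$ (respectively its Milstein analogue) can change sign. Since each splitting update is an affine function of the previous state with a strictly positive multiplicative coefficient plus an additive remainder proportional to $\mu\Delta$, all four discrete boundary properties will reduce to elementary sign bookkeeping, requiring nothing about the law of the increments beyond $E_{i-1}>0$.

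First I would rewrite each scheme in a canonical form, recording the additive remainder. For \eqref{SP1} the form is $\widetilde{Y}^{\textrm{L1}}(t_i)=E_{i-1}\bigl(\widetilde{Y}^{\textrm{L1}}(t_{i-1})+\mu\Delta\bigr)$; for \eqref{SP2} it is $\widetilde{Y}^{\textrm{L2}}(t_i)=E_{i-1}\widetilde{Y}^{\textrm{L2}}(t_{i-1})+\mu\Delta$; for \eqref{SP3} it is $\widetilde{Y}^{\textrm{S1}}(t_i)=E_{i-1}\bigl(\widetilde{Y}^{\textrm{S1}}(t_{i-1})+\mu\Delta/2\bigr)+\mu\Delta/2$; and for \eqref{SP4} it is $\widetilde{Y}^{\textrm{S2}}(t_i)=E_{i-1}\widetilde{Y}^{\textrm{S2}}(t_{i-1})+\mu\Delta\,E'_{i-1}$, where $E'_{i-1}:=e^{-(\frac{1}{\tau}+\frac{\sigma^2}{2})\frac{\Delta}{2}+\sigma\psi_{i-1}}>0$ and the factor multiplying the state in \eqref{SP4} is the positive quantity $e^{-(\frac{1}{\tau}+\frac{\sigma^2}{2})\Delta+\sigma(\varphi_{i-1}+\psi_{i-1})}$. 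In every case the additive remainder carries the sign of $\mu$ and is multiplied only by strictly positive quantities and by the strictly positive step $\Delta$.

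Then I would verify the four properties by direct sign analysis. For the \emph{discrete unattainable property} ($\mu\ge0$, $\widetilde{Y}(t_{i-1})>0$): the product $E_{i-1}\widetilde{Y}(t_{i-1})>0$ and every remainder is $\ge0$, hence $\widetilde{Y}(t_i)>0$. For the \emph{discrete absorbing property} ($\mu=0$, $Y_0=0$): each remainder vanishes and the first step returns $E_0\cdot 0=0$. For the \emph{discrete entrance property} ($\mu>0$, $Y_0=0$): the multiplicative term vanishes but the remainder is strictly positive (for instance $\mu\Delta\,E'_0>0$ for \eqref{SP4} and $\tfrac{\mu\Delta}{2}(E_0+1)>0$ for \eqref{SP3}), giving $\widetilde{Y}(t_1)>0$. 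For the \emph{discrete exit property} ($\mu<0$, $\widetilde{Y}(t_{i-1})\le0$): the multiplicative term is $\le0$ while the remainder is strictly negative, so $\widetilde{Y}(t_i)<0$.

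The only point demanding care, and the one I expect to be the true content of the argument, is securing \emph{strict} rather than weak inequalities in the entrance and exit cases, where the previous state may sit exactly at the boundary $0$. Strictness is rescued precisely by the strictly signed $\mu\Delta$ remainders together with the strict positivity of $E_{i-1}$ and $E'_{i-1}$: when the state contribution degenerates to zero, it is the remainder that forces the correct strict sign. Everything else is routine. Crucially, none of these inequalities imposes any constraint on $\Delta$, so the properties hold for arbitrary $\Delta>0$, which is the assertion.
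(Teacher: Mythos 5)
Your proposal is correct and is essentially the paper's own argument, which simply observes that the properties follow from the sign assumptions on $\mu$ and the strict positivity of the exponential factors in \eqref{SP1}--\eqref{SP4}; you have merely written out the sign bookkeeping that the paper leaves implicit. The point you flag about strict inequalities in the entrance and exit cases is handled exactly as you describe, by the strictly signed $\mu\Delta$ terms.
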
\vspace{-0.1cm}
The discrete boundary properties can be verified from  \eqref{SP1}-\eqref{SP4}, \eqref{Lin} and \eqref{Log}, using the corresponding assumptions on the parameter $\mu$ and the positivity of the exponential function. A detailed proof of Proposition~\ref{Theorem2} is given in Appendix \ref{appD}.

\vspace{-0.4cm}
\section{Simulation results}
\label{sec5}
\vspace{-0.2cm}

We now illustrate the theoretical results introduced in the previous sections through a series of simulations. First, we represent graphically the mean-square convergence order of the different numerical methods and discuss their required computational effort. Second, we focus on the conditional and asymptotic moments. Third, we compare the ability of the different methods to estimate the stationary density of the process. Finally, we consider the boundary properties, and provide a further investigation of the behaviour of the numerical solutions at the boundary.

\subsection{Mean-square convergence order and computational effort}
\label{sec5:1}

The mean-square convergence order of the different numerical methods can be approximated via the root mean-squared error (RMSE) considered as a function of the time step $\Delta$. In particular, we define
\begin{equation}\label{RMSE}
\text{RMSE}(\Delta):=\left( \frac{1}{n} \sum\limits_{k=1}^{n} \left| Y_k(t_{\text{max}})-\widetilde{Y}_k(t_{\text{max}}) \right|^2 \right)^{1/2}, 
\end{equation}
where ${Y}_k(t_{\text{max}})$ and $\widetilde{Y}_k(t_{\text{max}})$ denote the $k$-th realisation and approximation (obtained under a numerical method using the time step $\Delta$) of the process, respectively, at a fixed time $t_{\text{max}}$.

In the left panel of Figure \ref{Fig_convergence}, we report the RMSEs of the different schemes as a function of the time step $\Delta$ and in $\textrm{log10}$ scale. We use the same parameter setting as in Figure 4.2 in \cite{Foster2020}, i.e., we fix $n=10^5$, $t_{\text{max}}=5$, $Y_0=0.06$, $\mu=0.004$, $\tau=10$ and $\sigma=0.6$. Since the IGBM is not known explicitly, the values ${Y}_k(t_{\text{max}})$ are obtained under the log-ODE method, using the small time step $\Delta=2^{-10}$. The approximated values $\widetilde{Y}_k(t_{\text{max}})$ are produced under the considered numerical methods and for different values of $\Delta$, specifically $\Delta=2^{-l}$, $l=0,\ldots,8$. Note that the ${Y}_k(t_{\text{max}})$ and $\widetilde{Y}_k(t_{\text{max}})$ have to be computed with respect to the same Brownian paths, see \cite{Foster2020} and its supporting code for how to deal with the rescaled space-time L\'{e}vy areas of a Brownian~increment. \newpage

\begin{figure}
	\centering
	\includegraphics[width=0.8\textwidth]{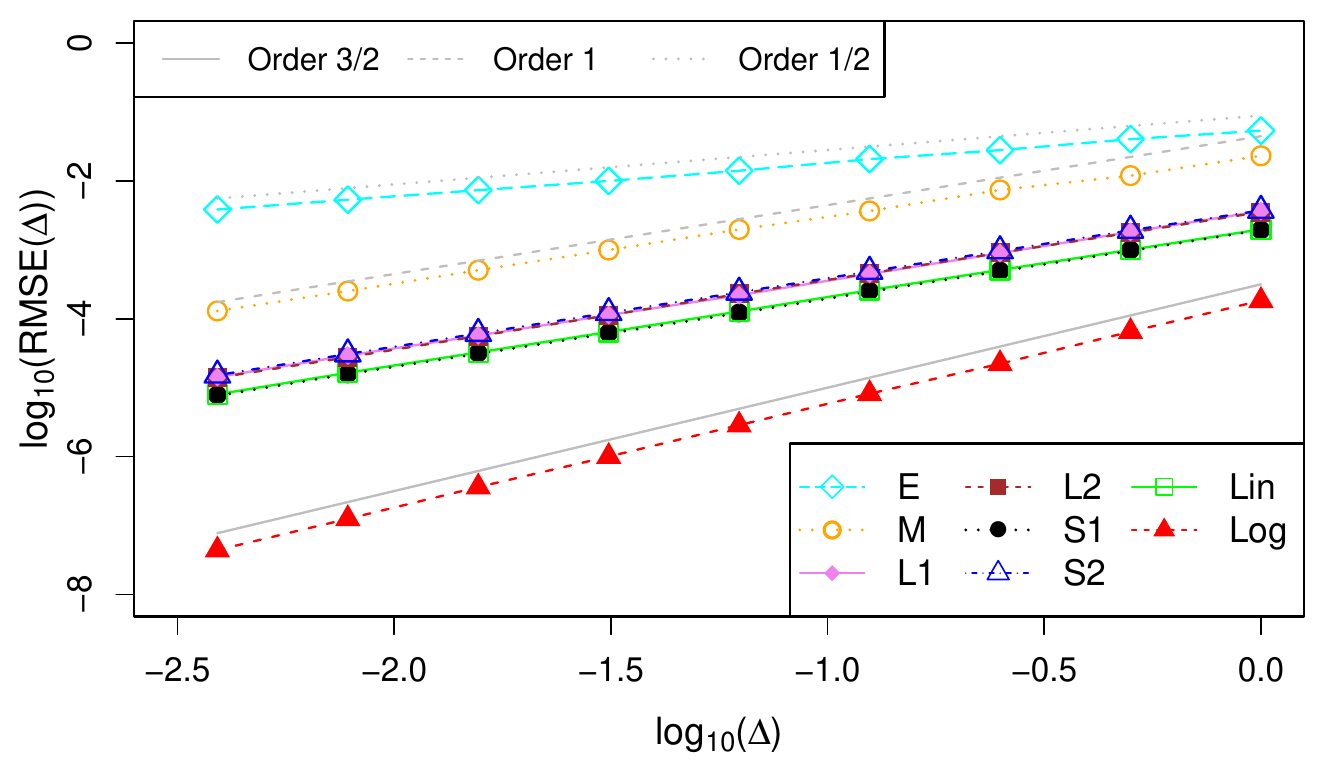}\\\vspace{-0.3cm}
	\caption{RMSE \eqref{RMSE} in $\textrm{log10}$ scale for $\textrm{E, M, L1, L2, S1, S2, Lin, Log}$ as a function of $\Delta$, for $n=10^5$, $t_{\text{max}}=5$, and the same parameter values as used in Figure 4.2 in \cite{Foster2020}, i.e., $Y_0=0.06$, $\mu=0.004$, $\tau=10$ and $\sigma=0.6$.}
	\label{Fig_convergence}
\end{figure}

\begin{table}
	{\footnotesize  \vspace{-0.1cm}
		\caption{Number of operations, function evaluations and random numbers required per iteration, i.e., required to produce $\widetilde{Y}(t_i)$ given $\widetilde{Y}(t_{i-1})$, $\Delta$, $\tau$, $\mu$ and $\sigma$.}
		\label{table_Operations}
		\begin{center} \vspace{-0.5cm}
			\scalebox{1.1}{
				\begin{tabular}{|c!{\vrule width 1pt}c|c|c|c!{\vrule width 1pt}c|}
					\hline 
					Effort & $+,-,\times,/$ & $\sqrt{\cdot}$ & $\exp(\cdot)$ & $N(0,1)$ & $\sum$ \\ \noalign{\hrule height 1pt} 
					$\textrm{E}$ & $10$ & $1$ & $0$ & $1$ & $12$ \\
					$\textrm{M}$ & $15$ & $1$ & $0$ & $1$ & $17$ \\
					$\textrm{L1}$ & $12$ & $1$ & $1$ & $1$ & $15$ \\
					$\textrm{L2}$ & $12$ & $1$ & $1$ & $1$ & $15$ \\
					$\textrm{S1}$ & $14$ & $1$ & $1$ & $1$ & $17$ \\
					$\textrm{S2}$ & $19$ & $1$ & $2$ & $2$ & $24$ \\
					$\textrm{Lin}$ & $15$ & $1$ & $1$ & $1$ & $18$ \\
					$\textrm{Log}$ & $26$ & $2$ & $1$ & $2$ & $31$ \\
					\hline
			\end{tabular}}
	\end{center}} \vspace{-0.6cm}
\end{table}

As expected, we observe a mean-square convergence rate of order $3/2$ for the log-ODE scheme, a rate of order $1$ for the Milstein, piecewise linear and splitting methods, and a rate of order $1/2$ for the Euler-Maruyama discretisation. The log-ODE method yields the smallest RMSEs, and the Euler-Maruyama method produces the largest error estimates.  Among the order $1$ methods we observe differences in their accuracies. The first Strang scheme yields the smallest RMSEs, with error estimates slightly smaller that of the piecewise linear method. Moreover, the RMSEs of the two Lie-Trotter and second Strang methods are almost the same, the second Strang method performing slightly worse than the Lie-Trotter schemes. The Milstein method yields the largest error estimates in the considered class of order $1$ methods.

These results should be considered in relation to the computational effort required by the different schemes to generate a path, see, e.g., \cite{Debrabant2009}. We measure this effort by counting the number of operations, function evaluations and random numbers required per iteration, i.e., required to produce $\widetilde{Y}(t_i)$ given $\widetilde{Y}(t_{i-1})$, $\Delta$, $\tau$, $\mu$ and $\sigma$. This is summarised in Table \ref{table_Operations}. The log-ODE method requires the largest computational effort, and the Euler-Maruyama method the slightest. While the effort required by the two Lie-Trotter schemes is the same, the effort of the second Strang method clearly exceeds that of the first. Moreover, while the second Strang scheme yields almost the same RMSEs as the Lie-Trotter schemes (cf. Figure \ref{Fig_convergence}), it requires a greater effort to produce these errors (cf. Table \ref{table_Operations}). 

\begin{figure}
	\centering
	\includegraphics[width=0.9\textwidth]{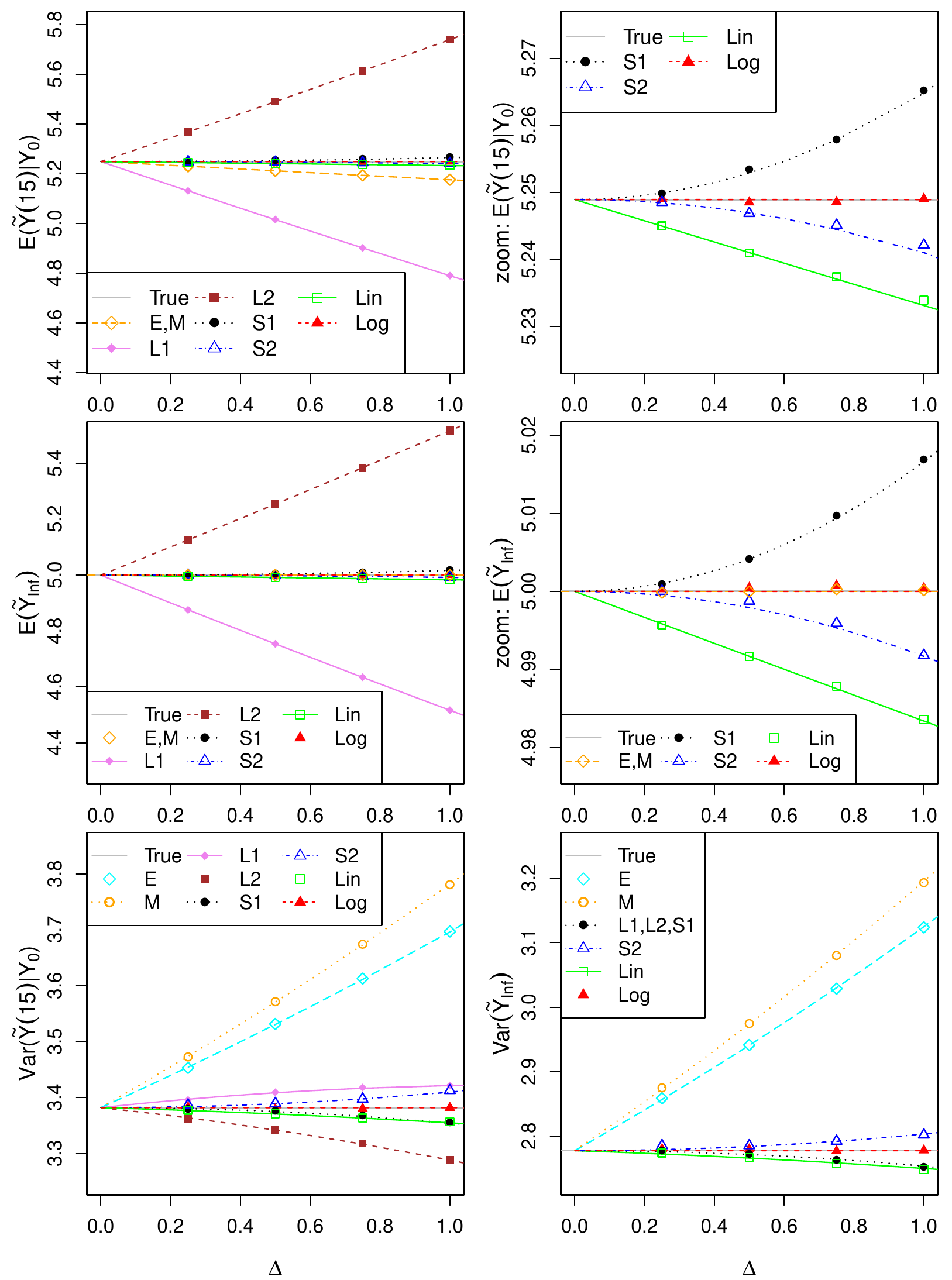}\\
	\caption{Theoretical conditional and asymptotic means and variances of the different numerical methods (lines) as functions of $\Delta$, for $\mu=1$, $\tau=5$, $\sigma=0.2$, $Y_0=10$ and $t_i=15$, and corresponding values obtained via simulations (symbols), for $\Delta=0.25,0.5,0.75,1$. The true conditional and asymptotic mean and variance of the IGBM are represented by the grey horizontal~lines.}
	\label{Fig_moments_cond}
\end{figure}

\subsection{Conditional and asymptotic moments}

Here, we illustrate that the conditional and asymptotic means and variances obtained via numerical simulations are in agreement with the previously derived theoretical expressions. To do so, we define the sample mean $\hat{m}_{t_i}$ and variance $\hat{v}_{t_i}$ as follows
\begin{eqnarray}
\mathbb{E}[Y(t_i)|Y_0]\label{m}&\approx& \mathbb{E}[\widetilde{Y}(t_i)|Y_0] \approx \hat{m}_{t_i}:=\frac{1}{n}\sum\limits_{k=1}^{n} \widetilde{Y}_k(t_i), \\
\textrm{Var}(Y(t_i)|Y_0)\label{v}&\approx& \textrm{Var}(\widetilde{Y}(t_i)|Y_0) \approx \hat{v}_{t_i}:=\frac{1}{n-1}\sum\limits_{k=1}^{n} \left( \widetilde{Y}_k(t_i)-\hat{m}_{t_i} \right)^2,
\end{eqnarray}
where $\widetilde{Y}_k(t_i)$ denotes the $k$-th simulated value of $Y(t_i)$ under each considered numerical method, respectively.
We denote by $\text{RE}(\hat{m}_{t_i})$ and $\text{RE}(\hat{v}_{t_i})$ the relative biases \eqref{cond_Bias_mean} and \eqref{cond_Bias_variance}, estimated replacing $\mathbb{E}[\widetilde{Y}(t_i)|Y_0]$ and $\textrm{Var}(\widetilde{Y}(t_i)|Y_0)$ with the sample mean \eqref{m} and variance \eqref{v}, respectively. To investigate the asymptotic case, we fix $t_i=100$ and denote by $\text{RE}(\hat{m}_{100})$ and $\text{RE}(\hat{v}_{100})$ the relative biases \eqref{asym_bias_mean} and \eqref{asym_bias_variance}, estimated replacing $\mathbb{E}[\widetilde{Y}_\infty]$ and $\textrm{Var}(\widetilde{Y}_\infty)$ with $\hat{m}_{100}$ and $\hat{v}_{100}$, respectively.

In the top and bottom left panels of Figure \ref{Fig_moments_cond}, we fix $t_i=15$ and report the true conditional mean $\mathbb{E}[Y(15)|Y_0]$ \eqref{cond_mean} and variance $\textrm{Var}(Y(15)|Y_0)$ \eqref{cond_var} (grey horizontal lines), the theoretical conditional means $\mathbb{E}[\widetilde{Y}(15)|Y_0]$ \eqref{EZi} and variances $\textrm{Var}(\widetilde{Y}(15)|Y_0)$ \eqref{VZi} of the numerical methods as a function of the time step $\Delta$ and their estimated values (symbols) $\hat{m}_{15}$ \eqref{m} and $\hat{v}_{15}$ \eqref{v}, derived for $\Delta=0.25,0.5,0.75,1$. We calculate the sample moments from $n=10^7$ simulations of $Y(15)$, for $\mu=1$, $\tau=5$, $\sigma=0.2$ and $Y_0=10$. In the middle and bottom right panels of Figure \ref{Fig_moments_cond}, we report the true asymptotic mean $\mathbb{E}[Y_\infty]$ \eqref{statE} and variance $\textrm{Var}(Y_\infty)$ \eqref{statV} (grey horizontal lines), the theoretical asymptotic means $\mathbb{E}[\widetilde{Y}_\infty]$ \eqref{lim_EM_1}, \eqref{lim_E_1}-\eqref{lim_E_4}, \eqref{lim_E_Lin}, \eqref{lim_E_Log} and variances $\textrm{Var}(\widetilde{Y}_\infty)$ \eqref{limVarEM}, \eqref{limVarM}, \eqref{limVar}, \eqref{limVar4}, \eqref{limVarLin}, \eqref{limVarLog} as a function of the time step $\Delta$ and their estimated values (symbols) $\hat{m}_{100}$ \eqref{m} and $\hat{v}_{100}$ \eqref{v}, derived for $\Delta=0.25,0.5,0.75,1$. The corresponding relative biases $\text{RE}(\hat{m}_{15})$, $\text{RE}(\hat{v}_{15})$, $\text{RE}(\hat{m}_{100})$ and $\text{RE}(\hat{v}_{100})$, 
for $\Delta=0.5$ and $\Delta=1$ are reported in percentage in Table~\ref{table2}. The quantities obtained through numerical simulations are in agreement with the theoretical ones. Moreover, we verified that there is no noteworthy difference in the standard deviations of the estimated values across the different numerical schemes.

\subsection{Stationary density}

As a further illustration, we investigate the stationary distribution of the IGBM. 
Under the conditions $\sigma^2\tau<2$ and $\mu>0$, the stationary distribution of $Y$ exists and is an inverse gamma distribution \cite{Barone2005,Donofrio2018,Sorensen2007,Zhao2009} with mean \eqref{statE} and variance \eqref{statV}. The probability density function of the stationary distribution of $Y$, which we denote by $f_{Y_\infty}$, is given by 
\vspace{-0.1cm}\begin{equation}\label{InvGamma}
f_{Y_\infty}(y;\alpha,\beta) :=\frac{\beta^\alpha}{\Gamma(\alpha)}y^{-\alpha-1}e^{-\beta/y},
\end{equation}
where $\Gamma(\cdot)$ denotes the gamma function, $\alpha=1+2/\sigma^2\tau$ and $\beta=2\mu/\sigma^2$. 

In Figure \ref{SP1_2_3}, we report the true stationary density $f_{Y_\infty}$ \eqref{InvGamma} (grey solid lines) and the densities $\hat{f}_{{Y}_\infty}$, estimated from $n=10^7$ simulated values of $Y(100)$, for $\mu=1$, $\tau=5$, $\sigma=0.55$ and $Y_0=10$, using the different schemes. The densities are calculated with a kernel density estimator, i.e.,
\begin{equation*}
f_{Y_\infty}(y) \approx \hat{f}_{{Y}_\infty}(y):=\frac{1}{n h}\sum\limits_{k=1}^{n} \mathcal{K}\left( \frac{y-\widetilde{Y}_k(100)}{h} \right), 
\end{equation*}
where the bandwidth $h$ is a smoothing parameter and $\mathcal{K}$ is a kernel function (here Gaussian). If $\Delta=0.5$ (left panels), the Strang and ODE schemes (bottom left panel) accurately preserve the stationary density, while the other schemes (top left panel) yield estimates that deviate from the true density. This discrepancy increases as $\Delta$ increases (top right panel), while the Strang and ODE schemes (bottom right panel) still yield satisfactory estimates. 

To quantify the distance between the true and the estimated densities under the considered numerical schemes for different time steps, we consider their Kullback-Leibler (KL) divergences given by
\begin{equation}\label{KL}
\text{KL}:=
\int f_{Y_\infty}(y) \ \text{log} \left( \frac{f_{Y_\infty}(y)}{\hat{f}_{{Y}_\infty}(y)}\right) dy, 
\end{equation}
where the integral is approximated using trapezoidal integration. 
The results shown in Figure \ref{SP1_2_3} are confirmed by the KL divergences \eqref{KL} reported in Table \ref{table2}. In particular, the best performance is achieved by the log-ODE method, which yields a very accurate estimate of the stationary density, even for $\Delta=1$, and even though for $\tau=5$ and $\sigma=0.55$ the first Strang scheme introduces a smaller bias in the asymptotic variance (see Subsection \ref{subsec:4:1:1:new}). Moreover, for the chosen parameter setting, the Strang schemes yield slightly better estimates of the stationary density than the piecewise linear method, and the Lie-Trotter schemes outperform the Euler-Maruyama and Milstein methods. 

\vspace{1cm}
\begin{table}[H]
	{\footnotesize  
		\caption{Comparison of theoretical quantities with simulated values ($n=10^7$). We report relative conditional mean and variance biases \eqref{cond_Bias_mean} and \eqref{cond_Bias_variance}, asymptotic mean and variance biases \eqref{asym_bias_mean} and \eqref{asym_bias_variance} (in parentheses) and $1000$ times the KL divergences \eqref{KL} for $\Delta=0.5$ and $\Delta=1$. The parameters are $Y_0=10$, $\mu=1$, $\tau=5$, $\sigma=0.2$ (REs) and  $\sigma=0.55$ (KLs).}
		\label{table2}	
		\begin{center}\vspace{-0.3cm}
			\scalebox{1.1}{
				\begin{tabular}{|c!{\vrule width 1pt}c|c|c|c|c|} 
					\hline 
					$\mathbf{\Delta=0.5}$ & RE($\hat{m}_{15}$) in $\%$age & RE($\hat{v}_{15}$) in $\%$age & RE($\hat{m}_{100}$) in $\%$age & RE($\hat{v}_{100}$) in $\%$age & $1000 \cdot\text{KL}$ \\			
					\noalign{\hrule height 1pt} 
					$\textrm{E}$ & $-0.694 \ (-0.705)$ & $4.425 \ (4.4)$ & $-0.001 \ (0)$ & $5.897 \ (5.882)$ & $0.925$ \\
					$\textrm{M}$ & $-0.694 \ (-0.705)$ & $5.602 \ (5.586)$ & $-0.001 \ (0)$ & $7.092 \ (7.067)$ & $1.124$ \\
					$\textrm{L1}$  & $-4.44 \ (-4.45)$ & $0.81 \ (0.786)$ & $-4.917 \ (-4.917)$ & $-0.191 (-0.216)$ & $0.194$ \\
					$\textrm{L2}$ & $4.611 \ (4.601)$ & $-1.163 \ (-1.187)$ & $5.083 \ (5.083)$ & $-0.191 \ (-0.216)$ & $0.445$ \\
					$\textrm{S1}$  & $0.085 \ (0.075)$ & $-0.18 \ (-0.205)$ & $0.083 \ (0.083)$ & $-0.191 \ (-0.216)$ & $0.005$ \\
					$\textrm{S2}$ & $-0.039 \ (-0.038)$ & $0.228 \ (0.204)$ & $-0.025 \ (-0.042)$ & $0.281 \ (0.233)$ & $0.009$ \\
					$\textrm{Lin}$ & $-0.152 \ (-0.151)$ & $-0.33 \ (-0.335)$ & $-0.167 \ (-0.166)$ & $-0.391 \ (-0.415)$ & $0.045$ \\
					$\textrm{Log}$ & $-0.008 \ (-0.0001)$ & $-0.094 \ (-0.001)$ & $0.01 \ (-0.0001)$ & $0.07 \ (-0.001)$ & $0.003$ \\
					\hline\hline
					$\mathbf{\Delta=1}$ & RE($\hat{m}_{15}$) in $\%$age & RE($\hat{v}_{15}$) in $\%$age & RE($\hat{m}_{100}$) in $\%$age & RE($\hat{v}_{100}$) in $\%$age & $1000 \cdot\text{KL}$ \\
					\noalign{\hrule height 1pt} 
					$\textrm{E}$ & $-1.384 \ (-1.391)$ & $9.315 \ (9.296)$ & $0.006 \ (0)$ & $12.461 \ (12.5)$ & $5.139$ \\
					$\textrm{M}$ & $-1.383 \ (-1.391)$ & $11.796 \ (11.807)$ & $0.003 \ (0)$ & $14.957 \ (15.038)$ & $3.743$ \\
					$\textrm{L1}$  & $-8.742 \ (-8.75)$ & $1.176 \ (1.169)$ & $-9.663 \ (-9.667)$ & $-0.921 \ (-0.862)$ & $0.639$ \\
					$\textrm{L2}$ & $9.361 \ (9.353)$ & $-2.762 \ (-2.769)$ & $10.337 \ (10.333)$ & $-0.921 \ (-0.862)$ & $2.981$ \\
					$\textrm{S1}$  & $0.31 \ (0.302)$ & $-0.808 \ (-0.815)$ & $0.337 \ (0.333)$ & $-0.921 \ (-0.862)$ & $0.069$ \\
					$\textrm{S2}$ & $-0.129 \ (-0.151)$ & $0.91 \ (0.814)$ & $-0.164 \ (-0.166)$ & $0.879 \ (0.928)$ & $0.071$ \\
					$\textrm{Lin}$ & $-0.286 \ (-0.301)$ & $-0.745 \ (-0.802)$ & $-0.329 \ (-0.332)$ & $-1.05 \ (-0.992)$ & $0.208$ \\
					$\textrm{Log}$ & $0.003 \ (-0.0002)$ & $-0.005 \ (-0.003)$ & $0.009 \ (-0.0002)$ & $0.019 \ (-0.004)$ & $0.001$ \\
					\hline
			\end{tabular}}
		\end{center}
	}
\end{table}

\begin{figure}[H]
	\centering
	\includegraphics[width=0.9\textwidth]{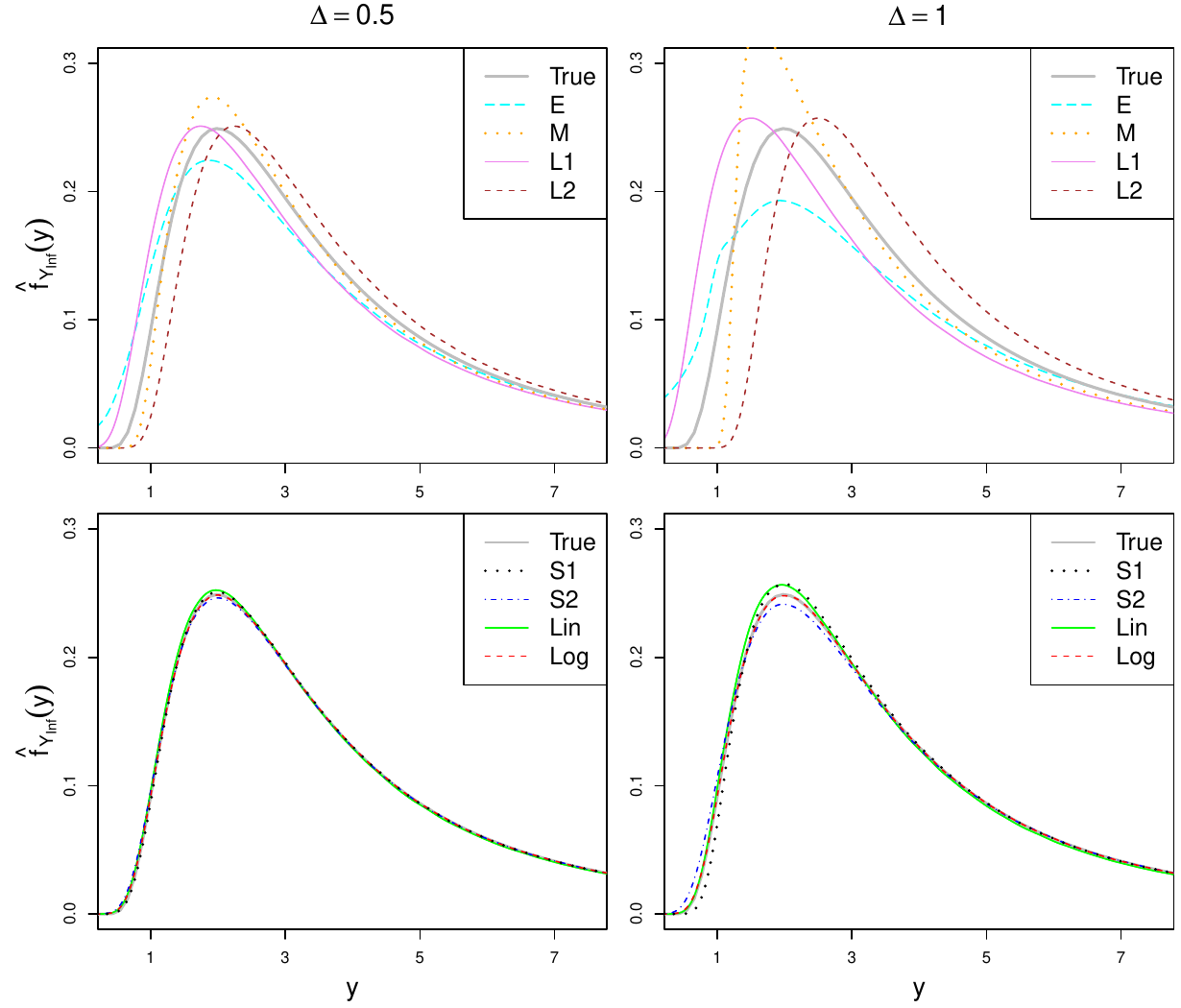}\\
	\caption{Comparison of the stationary density $f_{Y_\infty}$ (grey solid lines) and the estimated densities $\hat{f}_{{Y}_\infty}$ based on $n=10^7$ simulations of $Y(100)$, generated with the different numerical schemes, for $\Delta=0.5$ (left panels) and $\Delta=1$ (right panels). The underlying parameters are $\mu=1$, $\tau=5$, $\sigma=0.55$ and $Y_0=10$. The corresponding KL divergences \eqref{KL} are reported in Table \ref{table2}.}
	\label{SP1_2_3}
\end{figure}

\vspace{-0.6cm}
\subsection{Boundary properties}
\vspace{-0.1cm}

An illustration of the preservation of the boundary properties by the splitting and ODE schemes is provided in Figure \ref{4Paths}, where we report trajectories generated with the first Lie-Trotter, first Strang, piecewise linear and log-ODE schemes when the boundary $0$ is of entrance (top panel), unattainable and attracting (middle panel) and exit (bottom panel) type. In particular, we use $\mu=-0.5$, $0$ and $0.5$, respectively, $\tau=5$ and $\sigma=1$.

\begin{figure}
	\centering
	\includegraphics[width=1.0\textwidth]{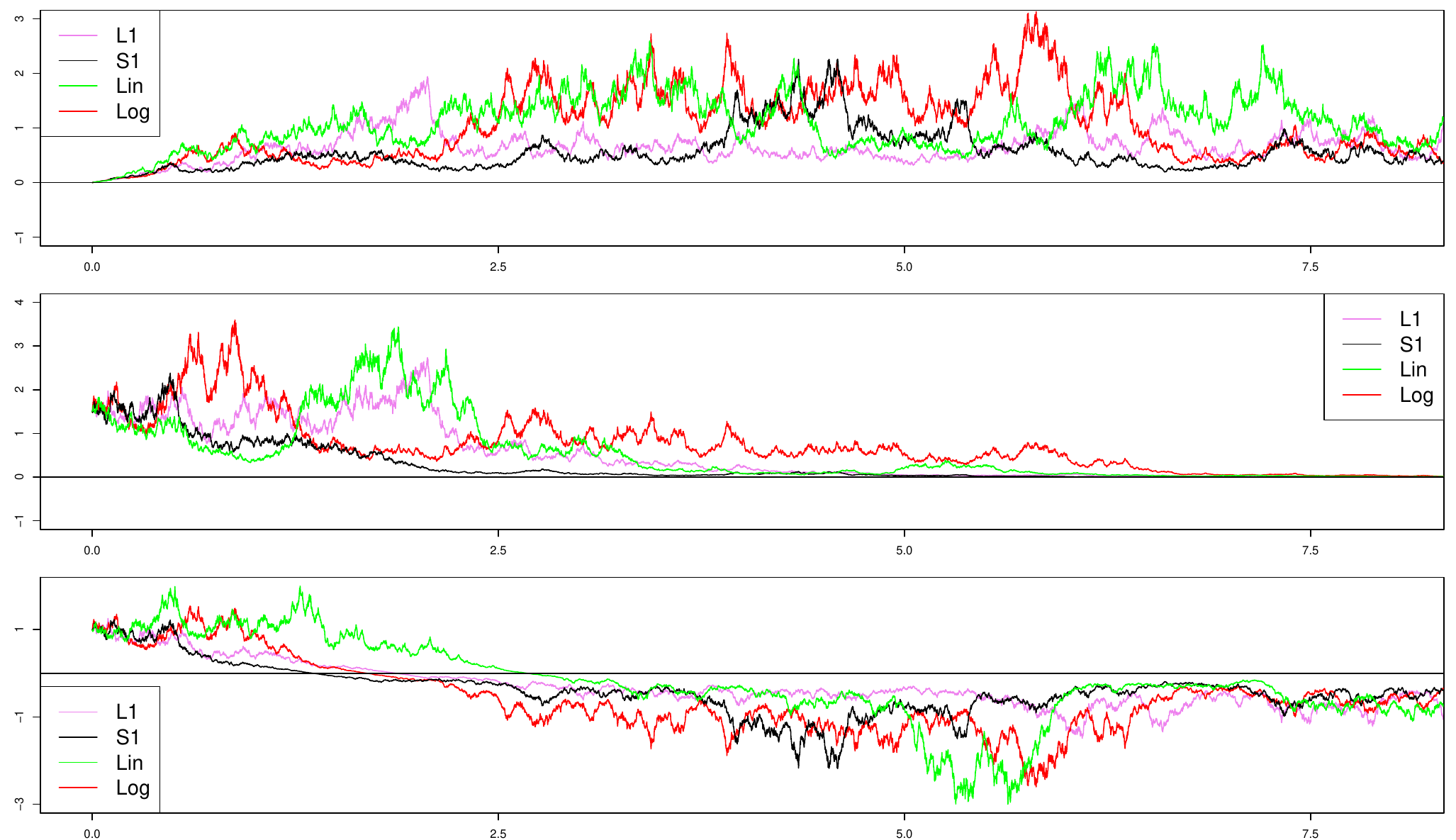}\\
	\caption{Trajectories of the IGBM generated with the first Lie-Trotter, first Strang, piecewise linear and log-ODE schemes, for  $\tau=5$ and $\sigma=1$. The parameter $\mu$ is chosen  such that the boundary $0$ is of entrance (top panel, $\mu=0.5$), unattainable and attracting (middle panel, $\mu=0$) and exit (bottom panel, $\mu=-0.5$) type.} 
	\label{4Paths}
\end{figure}

\vspace{-0.2cm}
\subsection{Crossing probability}
\vspace{-0.1cm}

As a further illustration of the boundary behaviour, we investigate the probability that the process $Y$ crosses the boundary $0$ in a fixed time interval $(0,t_{\textrm{max}}]$, with $t_{\textrm{max}}>0$ and $Y_0>0$. We define
\begin{equation}\label{T}
T:=\text{inf}\{ t>0 : Y(t) \leq 0 \}
\end{equation}
as the first passage (hitting) time of $Y$ through $0$, and 
estimate the probability that $T<t_{\textrm{max}}$ as follows  
\vspace{-0.2cm}\begin{equation}\label{P}
\mathbb{P}(T<t_\textrm{max}) \approx \hat{F}_{T}(t_\textrm{max}) := \frac{1}{n} \sum_{k=1}^{n}  \mathbbm{1}_{ \{ T_{k}<t_{\textrm{max}} \}}, 
\end{equation}
where $T_{k}$ denotes the crossing time \eqref{T}, which is obtained from the $k$-th simulated path of $Y$ and $\mathbbm{1}_A$ denotes the indicator function of the set $A$.
We are interested in situations where the process is in a high noisy regime, i.e., it is perturbed by a large noise intensity $\sigma$ and is not in its stationary~regime. 

In Figure \ref{Pmu}, we report $\mathbb{P}(T<t_\textrm{max})$, estimated from $n=10^6$ simulated trajectories under the different numerical schemes, as a function of ${\mu}$, for $\sigma=5$, $\tau=5$, $Y_0=1$, $t_{\text{max}}=0.5$ and different choices of the time step, namely $\Delta=0.01$ (left panel), $\Delta=0.025$ (middle panel) and $\Delta=0.05$ (right panel). The threshold $0$ is of exit, unattainable and attracting (denoted by dashed grey vertical lines) or entrance type depending on whether ${\mu}<0$, ${\mu}=0$ or ${\mu}>0$, respectively. Note that the functions obtained under the splitting and ODE schemes lie close to each other, in spite of the large value of $\sigma$.
When the boundary $0$ is of entrance or unattainable and attracting type, it is known that $\mathbb{P}(T<t_\textrm{max})=0$ for all values of $t_{\text{max}}$. However, only the splitting and ODE schemes correctly preserve this property, while the Euler-Maruyama method drastically fails for all considered values of $\Delta$ and the Milstein scheme only preserves it  for small values of $\Delta$ (left and middle panels). The latter is in agreement with condition \eqref{cond_Delta}. Consider, e.g., $y=Y_0=1$. Then $\Delta<5/122 \approx 0.0402$ and $\Delta<5/127 \approx 0.0394$ is required in the entrance or unattainable and attracting case, respectively. 
In the exit scenario, the probabilities obtained from the Euler-Maruyama and Milstein schemes lie above those obtained from the splitting and ODE schemes. This suggests that the Euler-Maruyama and Milstein methods yield trajectories that  exit from $[0,+\infty)$ faster than those generated from the other schemes. 
Similar results are obtained when studying these probabilities as a function of $t_{\text{max}}$ for fixed ${\mu}$. Moreover, independent of the type of boundary behaviour, the crossing probabilities obtained from the Strang splitting and log-ODE schemes seem not to vary significantly as $\Delta$ increases (a few undetected crossings may occur). This suggests their reliability even for large time steps, while those obtained from the Euler-Maruyama and Milstein schemes change for different choices of $\Delta$. The crossing probabilities derived under the Lie-Trotter and piecewise linear methods deviate slightly as $\Delta$ is increased, the latter one performing a bit better.

\begin{figure}
	\centering
	\centering
	\includegraphics[width=1.0\textwidth]{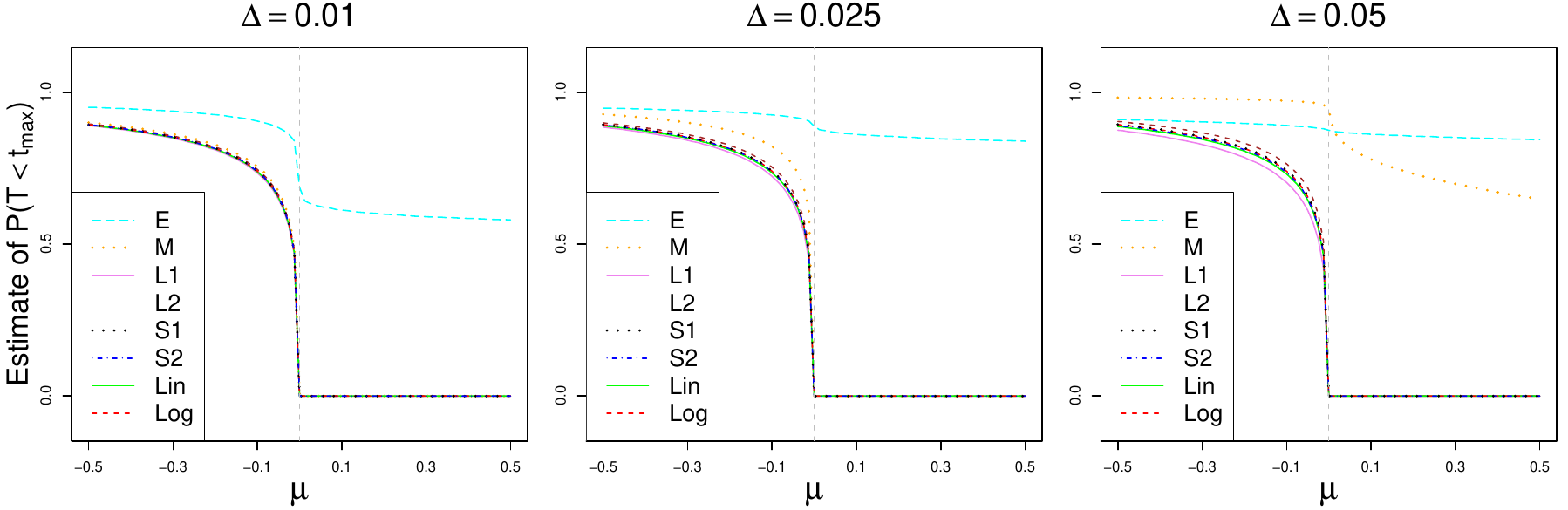}\\
	\caption{Probability $\mathbb{P}(T<t_\textrm{max})$ \eqref{P}, estimated from n=$10^6$ simulated trajectories under the different numerical schemes, as a function of ${\mu}$, for different choices of the time step, namely $\Delta=0.01$ (left panel), $\Delta=0.025$ (middle panel) and $\Delta=0.05$ (right panel), $t_{\text{max}}=0.5$, $\tau=\sigma=5$ and $Y_0=1$. The boundary $0$ is of entrance, unattainable and attracting or exit type depending on whether $\mu>0$, $\mu=0$ (denoted by dashed grey vertical lines) or $\mu<0$, respectively.}
	\label{Pmu}
\end{figure}

\section{Conclusion}
\label{sec6}

Any numerical method, constructed to approximate a process of interest, should preserve its qualitative properties. Here, we focus on the IGBM, a process characterised by a constant inhomogeneous term, commonly applied in mathematical finance, neuroscience and other fields. We compare two Lie-Trotter splitting schemes, two Strang splitting schemes and two schemes based on the ODE approach (the classical piecewise linear method \cite{Wong1965_2} and the recently introduced log-ODE method \cite{Foster2020}) with the frequently applied Euler-Maruyama and Milstein methods both analytically and via simulations. 

We prove that, in contrast to the frequently applied methods, the splitting and ODE schemes preserve the different boundary properties of the IGBM, independently of the choice of the time discretisation step. We also investigate through simulations the probability that the process crosses the lower boundary. Compared to the splitting and ODE schemes, the Euler-Maruyama and Milstein methods suggest not only a positive crossing probability in the entrance or unattainable and attracting case, but also higher crossing probabilities in the exit scenario. 

Moreover, we provide closed-form expressions for the conditional and asymptotic means and variances of the considered numerical solutions, and analyse the resulting biases with respect to the true quantities. The Euler-Maruyama and Milstein schemes are the only methods having an asymptotically unbiased mean (if an extra condition, unrelated to the features of the model, is fulfilled). However, the splitting and ODE schemes yield better approximations of the variance of the process, and do not require extra conditions for the existence of the asymptotic quantities. We observe that the Strang splitting schemes clearly outperform the Lie-Trotter splitting schemes in terms of preserving the mean and stationary density of the process, and that the log-ODE method performs better than the piecewise linear method throughout. Both the Strang and log-ODE schemes show a solid performance. The biases introduced by the log-ODE method are even smaller than that of the Strang schemes for many relevant parameter configurations. However, the drawback of the log-ODE method is that its mean bias depends on the noise parameter $\sigma$, and, consequently, can deteriorate for large values of $\sigma$. In this case, the two Strang methods, which perform comparably good throughout, may be better alternatives.

Moreover, we emphasise that the first Strang scheme requires almost the same computational effort as the Lie-Trotter and standard methods, while the second Strang and log-ODE schemes are more computationally expensive. In particular, they require to generate two random numbers in each iteration and rely on more function evaluations.

Guaranteeing a correct behaviour of the simulated process near or at the boundary is important in a variety of applications such as optimal stopping problems or positive asset pricing models.  Moreover, having explicit closed-form expressions for the first two conditional and asymptotic moments of the numerical solutions may, for example, play an important role in moment based statistical inference \cite{Ditlevsen2019,Sorensen2007}. All schemes yield biased moments which will effect the inferential approaches. Knowing them explicitly may help to adjust the inferential procedure accordingly. The explicit closed-form expressions also allow for a direct control of the respective simulation accuracy through the time discretisation step. There is a trade-off between computation time and quality of the simulation. To achieve a reasonable computation time, it may be necessary to avoid very small time steps. This becomes particularly important when the numerical method is embedded, for example, in a simulation-based inference method \cite{Buckwar2019,Voss2014}.

The considered equation, its properties and their analysis are also meant as a contribution to extend the range of qualitative features that characterise the quality of numerical methods. The presented results on the IGBM may be extended to other numerical methods and to a wider class of SDEs with similar features. For example, one may derive the exact moments of numerical solutions of other Pearson diffusions \cite{Sorensen2007} and analyse their boundary behaviour in a similar fashion. The presented analysis may also be extended to multi-dimensional versions of the IGBM, and to a broader class of equations, e.g., via adapted linearisation and diagonalisation procedures. Finally, the construction of a boundary preserving numerical method for the IGBM which has at least an asymptotically unbiased mean, still remains an open problem.



\newpage
\appendix
\section{Proof of \Cref{Prop_cond_1}}
\label{appA}

\begin{proof}	
	Since the underlying $X_j$, $j=1,\ldots,k$, are iid, the mean and variance of $W_k$, $k \in \mathbb{N}$, are given by
	\begin{eqnarray*}\label{app01}
		\label{EWk}\mathbb{E}[W_k]&=&\mu_x^k, \\
		\label{VarWk}\textrm{Var}(W_k)&=&\textrm{Var}\left( \prod\limits_{j=1}^k X_j \right)=\prod\limits_{j=1}^k \mathbb{E}[X_j^2] - \prod\limits_{j=1}^k \mathbb{E}[X_j]^2=r^k-\mu_x^{2k}.
	\end{eqnarray*}
	Using the independence of $W_k$ and $H_{k+1}$, the mean of $Z_i$ conditioned on $Z_0$ is given by \eqref{EZi}.
	
	To compute the variance of $Z_i$ conditioned on $Z_0$, using the independence of $W_k$ and $H_{k+1}$, we have that
	\begin{eqnarray}\label{V}
	\textrm{Var}(W_kH_{k+1})&=&r^kr_h-\mu_x^{2k}\mu_h^2.
	\end{eqnarray}
	Further, using the independence assumption of $W_k$ and $H_{k+1}$, we obtain for $k<l$
	\begin{eqnarray}
	\hspace{-5cm}\label{Cov1}\textrm{Cov}(W_l,W_kH_{k+1})&=& \nonumber \mathbb{E}[W_l W_k H_{k+1}]-\mathbb{E}[W_l]\mathbb{E}[W_kH_{k+1}] \\ &\stackrel{k<l}{=}& r^k \mu_x^{l-k}p - \mu_x^{l+k}\mu_h ,\\ 
	\label{Cov2}\textrm{Cov}(W_lH_{l+1},W_kH_{k+1})&=& \nonumber \mathbb{E}[ W_l H_{l+1} W_k H_{k+1} ] - \mathbb{E}[W_l H_{l+1}] \mathbb{E}[W_k H_{k+1}] \\&\stackrel{k<l}{=}&\nonumber\mathbb{E}[W_l W_k H_{k+1}]\mathbb{E}[H_{l+1}]-\mathbb{E}[W_l]\mathbb{E}[H_{l+1}]\mathbb{E}[W_k]\mathbb{E}[H_{k+1}]\\
	&=&\mu_h r^k \mu_x^{l-k}p-\mu_x^{l+k}\mu_h^2.
	\end{eqnarray}
	Hence, the conditional variance of $Z_i$ given $Z_0$ is given by
	\begin{eqnarray}\label{condVarZ}
	\textrm{Var}(Z_i|Z_0)\nonumber&=&Z_0^2\textrm{Var}(W_i)+c_1^2\textrm{Var}\left(\sum_{k=0}^I W_kH_{k+1}\right)+2c_1Z_0\textrm{Cov}(W_i,\sum_{k=0}^I W_k H_{k+1})\\
	\nonumber&=&  Z_0^2\textrm{Var}(W_i)+c_1^2\left[\sum_{k=0}^I \textrm{Var}(W_k H_{k+1})+2\sum_{l=1}^I\sum_{k=0}^{l-1}\textrm{Cov}(W_l H_{l+1},W_k W_{k+1})\right]\\&&\hspace{0.5cm}+2c_1Z_0\sum_{k=0}^I\textrm{Cov}(W_i,W_k H_{k+1}),
	\end{eqnarray}
	yielding \eqref{VZi} after plugging \eqref{V}, \eqref{Cov1} and \eqref{Cov2} into \eqref{condVarZ}.
\end{proof}

\vspace{-0.5cm}
\section{Proof of \Cref{Lemma_cond_1}}
\label{appB}

\vspace{-0.3cm}
\begin{proof}
	Define $W_k^l:=\prod\limits_{j=k}^{l}X_j$, with $\mathbb{E}[W_k^l]=\mu_x^{l-k+1}$. In the following, we use that if $H_{k+1}=1$, then $p=1$, since
	\begin{equation*}
	\mathbb{E}[W_lW_kH_{k+1}] = \mathbb{E}[W_l W_k ]\stackrel{k<l}{=}\mathbb{E}[W_k W_{k+1}^l W_k]= \mathbb{E}[W_k^2]\mathbb{E}[W_{k+1}^l]=r^k\mu_x^{l-k}.
	\end{equation*}\noindent
	Since the Gaussian increments $\xi_j\sim \mathcal{N}(0,\Delta)$ are iid, the Euler-Maruyama scheme $\widetilde{Y}^{\textrm{E}}(t_i)$ \eqref{Ye} can be rewritten as \eqref{Zi} with $W_k$ defined via
	\begin{eqnarray*}
		&&X_j:=\left(1-\frac{\Delta}{\tau}+\sigma\xi_j\right)\sim \mathcal{N}\left(1-\frac{\Delta}{\tau},\sigma^2\Delta\right), \quad j=1,\ldots,k,
	\end{eqnarray*}
	$H_{k+1}=1$, $p=1$
	and the values reported in Table \ref{table1}. Using the property that $\mathbb{E}[\xi_j]=\mathbb{E}[\xi_j^3]=0$, $\mathbb{E}[\xi_j^2]=\Delta$, $\mathbb{E}[\xi_j^4]=3\Delta^2$, the Milstein scheme $\widetilde{Y}^{\textrm{M}}(t_i)$ \eqref{Ym} can be rewritten as \eqref{Zi} with $W_k$ defined~via
	\begin{eqnarray*}
		&&X_j:=\left(1-\frac{\Delta}{\tau}+\sigma\xi_j+(\xi^2_j-\Delta)\frac{\sigma^2}{2}\right), \quad j=1,\ldots,k,
	\end{eqnarray*}
	$H_{k+1}=1$, $p=1$ and the values reported in Table \ref{table1}. 
	
	\newpage
	
	The splitting scheme $\widetilde{Y}^{\textrm{L1}}(t_i)$ \eqref{Y1} can be rewritten as \eqref{Zi} with $W_k$ defined via
	\begin{equation}\label{Wi}
	X_j:=e^{-\left(\frac{1}{\tau}+\frac{\sigma^2}{2}\right)\Delta+\sigma \xi_j}, \quad j=1,\ldots,k,
	\end{equation}
	$H_{k+1}=1$, $p=1$
	and the values reported in Table \ref{table1}.
	Since $\xi_j\sim \mathcal{N}(0,\Delta)$, the random variable 
	$-\left(1/\tau+\sigma^2/2\right)\Delta+\sigma\xi_j \sim \mathcal{N}\left(-\left(1/\tau+\sigma^2/2\right)\Delta,\sigma^2\Delta\right)$,
	and thus the $X_j$ are iid random variables with log-normal distribution, mean $\mu_x$ and second moment $r$ given by
	\begin{eqnarray*}
		\mu_x=\label{meanLi}\mathbb{E}[X_j]&=&e^{-\left(\frac{1}{\tau}+\frac{\sigma^2}{2}\right)\Delta+\frac{1}{2}\sigma^2 \Delta}=e^{-\Delta/\tau},\\
		r=\label{varLi}\mathbb{E}[X_j^2]&=&(e^{\sigma^2\Delta}-1)e^{-2\left(\frac{1}{\tau}+\frac{\sigma^2}{2}\right)\Delta+\sigma^2 \Delta}+e^{-2\Delta/\tau}=e^{\sigma^2\Delta-2\Delta/\tau}.
	\end{eqnarray*}
	Similarly, the splitting schemes $\widetilde{Y}^{\textrm{L2}}(t_i)$ \eqref{Y2} and $\widetilde{Y}^{\textrm{S1}}(t_i)$ \eqref{Y3} can be rewritten as \eqref{Zi} using $X_j$ given by \eqref{Wi}, as for $\widetilde{Y}^{\textrm{L1}}$, and the values reported in Table \ref{table1}. 
	
	Further, since $\varphi_j$ and $\psi_j$ are iid random variables distributed as $\mathcal{N}(0,{\Delta}/{2})$, we have that $\xi_j:=\varphi_j+\psi_j\sim \mathcal{N}(0,\Delta)$. Setting
	\begin{eqnarray*}
		&&X_{j}:=e^{-(\frac{1}{\tau}+\frac{\sigma^2}{2})\Delta+\sigma \xi_{j}}, \quad H_{k+1}:=e^{-(\frac{1}{\tau}+\frac{\sigma^2}{2})\frac{\Delta}{2}+\sigma \psi_{k+1}}, \quad j=1,\ldots,k,
	\end{eqnarray*}\noindent
	using an index shift and splitting off the 0-th element,
	the splitting scheme $\widetilde{Y}^{\textrm{S2}}(t_i)$ \eqref{Y4} can be rewritten as  \eqref{Zi} with the values reported in Table \ref{table1}. In particular, note that $W_{k+1}^{k+1}=X_{k+1}=H_{k+1}M_{k+1}$ with $M_{k+1}$ having mean $\mu_x^{1/2}$ and being independent from $H_{k+1}$. Thus,
	\begin{eqnarray*}
	\mathbb{E}[W_lW_kH_{k+1}]&\stackrel{k<l}{=}&\mathbb{E}[W_k^2 W_{k+1}^{k+1} W_{k+2}^{l} H_{k+1}]=\mathbb{E}[W_k^2]\mathbb{E}[W_{k+2}^l] \mathbb{E}[H_{k+1}^2]\mathbb{E}[M_{k+1}] \\ &=& r^k \mu_x^{l-k-1}r_h \mu_x^{1/2}=r^k \mu_x^{l-k} p,
	\end{eqnarray*}
	with  $p=r_h \mu_x^{-1/2}$.
	
	The piecewise linear method $\widetilde{Y}^{\textrm{Lin}}(t_i)$ \eqref{Lin} can be rewritten as as \eqref{Zi} with $W_k$ defined via $X_j$, $j=1,\ldots,k$, as in \eqref{Wi}, 
	\begin{equation*}
	H_{k+1}:= \left( \frac{e^{-(\frac{1}{\tau}+\frac{\sigma^2}{2})\Delta+\sigma \xi_{k+1}}-1}{-(\frac{1}{\tau}+\frac{\sigma^2}{2})\Delta+\sigma \xi_{k+1}} \right),
	\end{equation*}
	and the values reported in Table \ref{table1}. In particular, the mean of $H_{k+1}$ is given by \eqref{L_delta_tau_sigma}, since
	\begin{eqnarray*}
		\nonumber \mu_h&=&\mathbb{E}[H_{k+1}]=\int\limits_{0}^{1} \mathbb{E}\left[ e^{s\left( -\bigl( \frac{1}{\tau}+\frac{\sigma^2}{2} \bigr)\Delta + \sigma \xi_{k+1} \right)} \right] ds=\int\limits_{0}^{1} e^{ -\left( \frac{1}{\tau}+\frac{\sigma^2}{2} \right)\Delta s + \frac{\sigma^2}{2} s^2 \Delta } \ ds = L_{\Delta,\tau,\sigma}. 
	\end{eqnarray*}
	Moreover, the second moment reads as
	\begin{eqnarray*}\label{barL_delta_tau_sigma}
		\nonumber r_h&=&\mathbb{E}[H_{k+1}^2] =  \int\limits_{0}^{1} \int\limits_{0}^{1} \mathbb{E}\left[ e^{r\left( -\left( \frac{1}{\tau}+\frac{\sigma^2}{2} \right)\Delta + \sigma \xi_{k+1} \right)} e^{s\left(-\left( \frac{1}{\tau}+\frac{\sigma^2}{2} \right)\Delta + \sigma \xi_{k+1} \right)} \right] \ dr ds  \\
		&=& \int\limits_{0}^{1} \int\limits_{0}^{1}  e^{-\left( \frac{1}{\tau}+\frac{\sigma^2}{2} \right)\Delta (r+s)+ \frac{\sigma^2}{2}\Delta \left( r^2+s^2+2rs \right) } \ dr ds = \bar{L}_{\Delta,\tau,\sigma},
	\end{eqnarray*}
	where
	{\scriptsize{\begin{eqnarray}
			\bar{L}_{\Delta,\tau,\sigma}&:=&\frac{1}{2\sigma^3\Delta}\exp\left( \frac{- \left( \frac{1}{\tau}+\frac{\sigma^2}{2} \right)^2 \Delta }{2\sigma^2} \right) \Bigg\{  -2\sigma e^{\left( \frac{1}{\tau}+\frac{\sigma^2}{2} \right)\frac{\Delta}{2} \left[ -4+\left( \frac{1}{\tau\sigma^2}+\frac{1}{2} \right) \right] }  \left( e^{\left( \frac{1}{\tau}+\frac{\sigma^2}{2} \right)2\Delta} + e^{2\sigma^2\Delta} -2 e^{\left( \frac{1}{\tau}+\frac{\sigma^2}{2} \right)\Delta+ \frac{\sigma^2}{2}\Delta } \right)    \\ 
			&& \nonumber \hspace{-1.7cm} + \sqrt{2\pi\Delta} \left( \text{erfi}\left[ \frac{\left( \frac{1}{\tau} + \frac{\sigma^2}{2}  \right)\sqrt{\Delta}}{\sigma\sqrt{2}} \right] \left( \frac{1}{\tau}+\frac{\sigma^2}{2} \right)  + \text{erfi}\left[ \frac{\left( \frac{1}{\tau} - \frac{3\sigma^2}{2}  \right)\sqrt{\Delta}}{\sigma\sqrt{2}} \right] \left( \left( \frac{1}{\tau}+\frac{\sigma^2}{2} \right) -2\sigma^2 \right)  + \text{erfi}\left[ \frac{\left( \frac{1}{\tau} - \frac{\sigma^2}{2}  \right)\sqrt{\Delta}}{\sigma\sqrt{2}} \right] 2 \left( -\frac{1}{\tau}+\frac{\sigma^2}{2} \right) \right) \Bigg\}.
			\end{eqnarray}}}\noindent
	In addition, $p=\widetilde{L}_{\Delta,\tau,\sigma}$, where
	\begin{equation}\label{tildeL_delta_tau_sigma} \hspace{-0.5cm}
	\widetilde{L}_{\Delta,\tau,\sigma}:=\frac{\sqrt{\pi}}{\sigma \sqrt{2\Delta}}\exp\left(\frac{-\left( \frac{1}{\tau}-\frac{\sigma^2}{2} \right)^2\Delta}{2\sigma^2} \right)\left( \text{erfi}\left[ \frac{\left( \frac{1}{\tau}-\frac{\sigma^2}{2} \right)\sqrt{\Delta}}{\sigma \sqrt{2}} \right] + \text{erfi}\left[ \frac{\left(- \frac{1}{\tau}+\frac{3\sigma^2}{2} \right)\sqrt{\Delta}}{\sigma \sqrt{2}} \right] \right),
	\end{equation}
	since
	\begin{eqnarray*}
		\mathbb{E}[W_l W_k H_{k+1}]&=&\mathbb{E}\left[ e^{-\left( \frac{1}{\tau}+\frac{\sigma^2}{2} \right)\Delta(l+k) + \sigma \left( \sum\limits_{m=1}^{l}\xi_m + \sum\limits_{j=1}^{k} \xi_j \right) } \int\limits_{0}^{1} e^{s \left( -\left( \frac{1}{\tau}+\frac{\sigma^2}{2} \right)\Delta + \sigma \xi_{k+1} \right)}  ds \ \right] \\ &=& \int\limits_{0}^{1} \mathbb{E}\left[ e^{-\left( \frac{1}{\tau}+\frac{\sigma^2}{2} \right)\Delta (l+k+s)+\sigma\left( \sum\limits_{m=1}^{l} \xi_m + \sum\limits_{j=1}^{k} \xi_j + s \xi_{k+1} \right) } \right] \ ds \\ 
		&=& \int\limits_{0}^{1}  e^{-\left( \frac{1}{\tau}+\frac{\sigma^2}{2} \right)\Delta (l+k+s)+ \frac{\sigma^2}{2}\Delta (l+3k+2s+s^2) } \ ds \\ &=&e^{-\frac{1}{\tau}(l+k)\Delta + \sigma^2 t_k } \int\limits_{0}^{1} e^{-\left( \frac{1}{\tau}-\frac{\sigma^2}{2} \right)\Delta s + \frac{\sigma^2}{2}\Delta s^2} \ ds = r^k \mu_x^{l-k} \widetilde{L}_{\Delta,\tau,\sigma}.
	\end{eqnarray*}
	
	The log-ODE method $\widetilde{Y}^{\textrm{Log}}(t_i)$ \eqref{Log} can be treated in the same way, except for considering 
	\begin{equation*}
	H_{k+1}:= \left( \frac{e^{-(\frac{1}{\tau}+\frac{\sigma^2}{2})\Delta+\sigma \xi_{k+1}}-1}{-(\frac{1}{\tau}+\frac{\sigma^2}{2})\Delta+\sigma \xi_{k+1}} \right)\left( 1-\sigma \rho_{k+1}+\sigma^2 \left[ \frac{3}{5} \rho_{k+1}^2 +\frac{\Delta}{30} \right] \right).
	\end{equation*}
	Using the independence of $\xi_{k+1}$ and $\rho_{k+1}$, and recalling that $\rho_{k+1}\sim\mathcal{N}(0,\Delta/12)$, the mean and second moment of $H_{k+1}$ are given by
	\begin{eqnarray}
	\label{K_delta_tau_sigma} \mu_h&=&\mathbb{E}[H_{k+1}]=K_{\Delta,\tau,\sigma}:=L_{\Delta,\tau,\sigma} \left( 1+\sigma^2 \frac{\Delta}{12} \right), \\
	\label{barK_delta_tau_sigma}r_h&=&\mathbb{E}[H^2_{k+1}]=\bar{K}_{\Delta,\tau,\sigma}:=\bar{L}_{\Delta,\tau,\sigma} \left( 1+\sigma^2\frac{\Delta}{4} +\sigma^4\Delta^2 \frac{43}{3600} \right),
	\end{eqnarray}
	where $L_{\Delta,\tau,\sigma}$ and $\bar{L}_{\Delta,\tau,\sigma}$ are as in \eqref{L_delta_tau_sigma} and \eqref{barL_delta_tau_sigma}, respectively. Moreover, we have that
	\begin{equation*}
	\mathbb{E}[W_lW_kH_{k+1}]=r^k\mu_x^{l-k}\widetilde{L}_{\Delta,\tau,\sigma}\left( 1+ \sigma^2 \frac{\Delta}{12} \right),
	\end{equation*}
	where $\widetilde{L}_{\Delta,\tau,\sigma}$ is as in \eqref{tildeL_delta_tau_sigma}, and thus
	\begin{equation}\label{tildeK_delta_tau_sigma}
	p=\widetilde{K}_{\Delta,\tau,\sigma}:= \widetilde{L}_{\Delta,\tau,\sigma}\left( 1+ \sigma^2 \frac{\Delta}{12} \right).
	\end{equation}
\end{proof}

\newpage

\section{Proof of \Cref{Prop_asym_1}}
\label{appC}

\begin{proof}
	When letting $i\to \infty$ (and thus $I \to \infty$), $\sum\limits_{k=1}^I \mu_x^k$ converges to $\mu_x/(1-\mu_x)$ if and only if $|\mu_x|<1$. Under the same condition, $\mu_x^i\to 0$, yielding the asymptotic mean of $Z_i$ given by \eqref{AEZi}.
	
	Letting $i\to\infty$ (and thus $I \to \infty$), $\sum\limits_{k=1}^I (r^{k}r_h-\mu_x^{2k}\mu_h^2)$ converges if and only if $r\in(0,1)$ and $|\mu_x|<1$. Hence, under these conditions, 
	\begin{eqnarray*}
		&&\lim_{i\to\infty}Z_0^2(r^i-\mu_x^{2i})= 0,\\
		&&\lim_{i\to\infty}\sum_{k=0}^{I}r^{k}r_h-\mu_x^{2k}\mu_h^2=-\frac{r_h}{r-1}+\frac{\mu_h^2}{(\mu_x^2-1)},\\
		&&\lim_{i\to\infty}\sum_{l=1}^{I}\sum_{k=0}^{l-1} \mu_h r^k \mu_x^{l-k}p-\mu_x^{l+k}\mu_h^2 =\frac{\mu_h\mu_x\left[\mu_h + p (\mu_x^2-1)-\mu_h r \right]}{(\mu_x-1)^2(1+\mu_x)(r-1)}
		,\\
		&&\lim_{i\to\infty}\sum_{k=0}^{I} r^k \mu_x^{i-k}p-\mu_x^{i+k}\mu_h =0,
	\end{eqnarray*}
	leading to the asymptotic variance of $Z_i$ given by \eqref{AVZi}.
\end{proof}

\section{Proof of \Cref{Theorem2}}
\label{appD}

\begin{proof}
	Define
	\begin{eqnarray*}
		a_i&:=&e^{-(\frac{1}{\tau}+\frac{\sigma^2}{2})\Delta+\sigma \xi_{i-1}}, \quad  
		\alpha_i:= \frac{e^{-(\frac{1}{\tau}+\frac{\sigma^2}{2})\Delta+\sigma \xi_{i-1}}-1}{-(\frac{1}{\tau}+\frac{\sigma^2}{2})\Delta+\sigma \xi_{i-1}}, \quad \beta_i:= 1-\sigma \rho_{i-1} +\sigma^2 \left[ \frac{3}{5} \rho_{i-1}^2 + \frac{\Delta}{30} \right].
	\end{eqnarray*}
	Due to the positivity of the exponential function, we have that $a_i>0$ for all $i=1,\ldots,N$. Since
	$(e^x-1)/x>0$ for all $x\neq 0$, 
	we have that $\alpha_i>0$ almost surely for all $i=1,\ldots,N$. Moreover, since $\sigma,\Delta>0$, the quantity $\beta_i>0$ for all $\rho_{i-1} \in \mathbb{R}$ and $i=1,\ldots,N$.
	
	We start with the piecewise linear method $\widetilde{Y}^{\textrm{Lin}}(t_i)$ defined via \eqref{Lin} and set
	\begin{equation*}
	b_i:=\widetilde{Y}^{\textrm{Lin}}(t_{i-1}), \quad c:=\mu\Delta.
	\end{equation*}
	Given $\mu \geq 0$ and $\widetilde{Y}^{\textrm{Lin}}(t_{i-1})>0$, we have that $b_i>0$ and $c\geq 0$. Thus, $\widetilde{Y}^{\textrm{Lin}}(t_i)=a_ib_i + c \alpha_i>0$ almost surely proving the \textit{discrete unattainable property}. Given $\mu=0$ and $Y_0=0$, we have that $b_1=c=0$. Thus, $\widetilde{Y}^{\textrm{Lin}}(t_1)= a_1b_1 + c \alpha_1=0$ proving the \textit{discrete absorbing property}. Given $\mu>0$ and $Y_0=0$, we have that $b_1=0$ and $c>0$. Thus, $\widetilde{Y}^{\textrm{Lin}}(t_1)= a_1 b_1 + c \alpha_1>0$ almost surely proving the \textit{discrete entrance property}. Given $\mu<0$ and $\widetilde{Y}^{\textrm{Lin}}(t_{i-1})\leq 0$, we have that $b_i \leq 0$ and $c<0$. Thus, $\widetilde{Y}^{\textrm{Lin}}(t_i)=a_i b_i + c \alpha_i<0$ almost surely proving the \textit{discrete exit property}. The properties for the log-ODE method can be proved in the same way, using the positivity of $\beta_i$.
	
	Now, we consider the Lie-Trotter splitting $\widetilde{Y}^{\textrm{L1}}(t_i)$ defined via \eqref{SP1} and set
	\begin{equation*}
	b_i:=\widetilde{Y}^{\textrm{L1}}(t_{i-1}), \quad c:=\mu\Delta.
	\end{equation*}
	Given $\mu \geq 0$ and $\widetilde{Y}^{\textrm{L1}}(t_{i-1})>0$, we have that $b_i>0$ and $c\geq 0$. Thus, $\widetilde{Y}^{\textrm{L1}}(t_i)=a_i(b_i+c)>0$ proving the \textit{discrete unattainable property}. Given $\mu=0$ and $Y_0=0$, we have that $b_1=0$ and $c=0$. Thus, $\widetilde{Y}^{\textrm{L1}}(t_1)=a_1(b_1+c)=0$ proving the \textit{discrete absorbing property}. Given $\mu>0$ and $Y_0=0$, we have that $b_1=0$ and $c>0$. Thus, $\widetilde{Y}^{\textrm{L1}}(t_1)=a_1(b_1+c)>0$ proving the \textit{discrete entrance property}. Given $\mu<0$ and $\widetilde{Y}^{\textrm{L1}}(t_{i-1})\leq0$, we have that $b_i\leq 0$ and $c<0$. Thus, $\widetilde{Y}^{\textrm{L1}}(t_i)=a_i(b_i+c)<0$ proving the \textit{discrete exit property}. The discrete boundary properties of the other three splitting methods can be proved in a similar way.
\end{proof}

\end{document}